\documentclass{article}

\usepackage{tikz}

\usepackage{amsthm}
\usepackage{amsmath}
\usepackage{amssymb}
\usepackage{enumerate}

\newtheorem{theorem}{Theorem}[section]
\newtheorem{lemma}[theorem]{Lemma}
\newtheorem{proposition}[theorem]{Proposition}
\newtheorem{corollary}[theorem]{Corollary}

\numberwithin{equation}{section}

\makeatletter
\newcommand{\nolisttopbreak}{\vspace{\topsep}\nobreak\@afterheading}
\makeatother
\newenvironment{listproof}[1][\proofname]{\begin{proof}[#1]\mbox{}\nolisttopbreak}{\end{proof}}

\newcommand{\oth}{\textsuperscript{th}\ }
\newcommand{\jac}{{\mathcal{J}}}
\newcommand{\hess}{{\mathcal{H}}}
\newcommand{\Mat}{\operatorname{Mat}}
\newcommand{\GL}{\operatorname{GL}}
\newcommand{\rk}{\operatorname{rk}}
\newcommand{\trdeg}{\operatorname{trdeg}}
\newcommand{\tp}{^{\rm t}}
\newcommand{\sdots}{\vdots\,\vdots\,\vdots}
\newcommand{\imp}{{\mathversion{bold}$\Rightarrow$} }
\newcommand{\zeromat}{\mbox{\begin{tikzpicture} \useasboundingbox (-0.1,-0.125) -- (0.1,0.125);
\draw (-0.05,0.05) -- node[anchor=center]{\rm 0} (0.05,-0.05); \end{tikzpicture}}}

\title{Quadratic polynomial maps with Jacobian rank two}
\author{Michiel de Bondt\footnote{The author was supported by the Netherlands 
        Organisation for Scientific Research (NWO).} \\
        Institute for Mathematics, Astrophisics and Particle Physics \\
        Radboud University Nijmegen \\
        \emph{Email address:} M.deBondt@math.ru.nl}

\begin{document}

\maketitle

\begin{abstract}
Let $K$ be any field and $x = (x_1,x_2,\ldots,x_n)$. 
We classify all matrices $M \in \Mat_{m,n}(K[x])$ whose entries 
are polynomials of degree at most 1, for which $\rk M \le 2$. As a special case, we
describe all such matrices $M$, which are the Jacobian matrix $\jac H$ (the matrix of
partial derivatives) of a polynomial map $H$ from $K^n$ to $K^m$. 

Among other things, we show that up to composition with linear maps over $K$, 
$M = \jac H$ has only two nonzero columns or only three nonzero rows in this case.
In addition, we show that $\trdeg_K K(H) = \rk \jac H$ for quadratic polynomial 
maps $H$ over $K$ such that $\frac12 \in K$ and $\rk \jac H \le 2$.

Furthermore, we prove that up to conjugation with linear maps over $K$,
nilpotent Jacobian matrices $N$ of quadratic polynomial maps, for which $\rk N \le 2$, 
are triangular (with zeroes on the diagonal), regardless of the characteristic of $K$. 
This generalizes several results by others.

In addition, we prove the same result for Jacobian matrices $N$ of quadratic 
polynomial maps, for which $N^2 = 0$. This generalizes a result by others, 
namely the case where $\frac12 \in K$ and $N(0) = 0$.
\end{abstract}

\paragraph{Key words:} quadratic polynomial map, Jacobian rank two, 
transcendence degree two, homogeneous, nilpotent, unipotent Keller map,
linearly triangularizable, strongly nilpotent, equivalent over $K$, 
similar over $K$.

\paragraph{MSC 2010:} 12E05, 12F20, 14R05, 14R10.

\section{Introduction}

Throughout this paper, $K$ is an arbitrary field and $x := (x_1,x_2,\ldots,x_n)$ 
is an $n$-tuple of indeterminates. 
We write $a|_{b=c}$ for the result of substituting $b$ by $c$ in $a$.

We call a polynomial $h \in K[x]$
\emph{homogeneous (of degree $d$)} if all terms of $h$ have the same degree
(and $\deg h = d$). We call the terms of degree $\deg h$ of $h$ the 
\emph{leading homogeneous part} of $h$. A \emph{linear form} is a polynomial
$h \in K[x]$ which is homogeneous of degree $1$.

Let $H \in K[x]^m$. Then $H = (H_1,H_2,\ldots,H_m)$ is a polynomial map 
from $K^n$ to $K^m$. The \emph{degree} of $H$ is defined by 
$\deg H := \max\{\deg H_1, \deg H_2, \ldots, \allowbreak \deg H_m\}$. 
We say that $H$ is \emph{homogeneous (of degree $d$)} if
$y_1 H_1 + y_2 H_2 + \cdots + y_m H_m$ is a homogeneous polynomial (of 
degree $d + 1$).

We write $\jac H$ for the \emph{Jacobian matrix of $H$} (with respect to $x$), i.e.
$$
\jac H = \left( \begin{array}{cccc}
\frac{\partial}{\partial x_1} H_1 & \frac{\partial}{\partial x_2} H_1 & 
                           \cdots & \frac{\partial}{\partial x_n} H_1 \\
\frac{\partial}{\partial x_1} H_2 & \frac{\partial}{\partial x_2} H_2 & 
                           \cdots & \frac{\partial}{\partial x_n} H_2 \\
\vdots & \vdots & \sdots\, & \vdots \\
\frac{\partial}{\partial x_1} H_m & \frac{\partial}{\partial x_2} H_m & 
                           \cdots & \frac{\partial}{\partial x_n} H_m \\
\end{array} \right)
$$
We call a matrix a \emph{Jacobian matrix} if it is the Jacobian matrix 
of some polynomial map. Note that this polynomial map is not 
uniquely determined if our base field $K$ has positive characteristic.

Let $R$ be a commutative ring with $1$. We write $\Mat_{m,n}(R)$ for the abelian 
group of matrices with $m$ rows and $n$ columns over $R$. So $\jac H \in \Mat_{m,n}(K[x])$.
We write $\Mat_n(R)$ for the ring of matrices with $n$ rows and $n$ columns 
over $R$. We define $\GL_n(R)$ as the group of invertible matrices in $\Mat_n(R)$, 
i.e.\@ $\GL_n(R) := \{ M \in \Mat_n(R) \mid \det M$ is a unit in $R\}$. 

If $R$ is a $K$-algebra, then we say that elements $M$ and $\tilde{M}$ 
of $\Mat_{m,n}(R)$ are \emph{equivalent over $K$} if there exists matrices 
$S \in \GL_m(K)$ and $T \in \GL_n(K)$ such that $\tilde{M} = SMT$. 
If $m = n$ and $S = T^{-1}$ in addition, then we say that $M$ and $\tilde{M}$ are
\emph{similar over $K$}. 

A matrix $M \in \Mat_n(R)$ is \emph{upper (lower) triangular} if all entries below 
(above) the principal diagonal are zero, and \emph{triangular (diagonal)} if $M$ is 
either (both) upper or (and) lower triangular. So a diagonal matrix may only have
nonzero entries on the diagonal which runs from its upper left corner to its lower
right corner. This diagonal is called \emph{the (principal) diagonal}.

We say that a matrix $M \in \Mat_n(R)$ is \emph{nilpotent} if the exists an $r \ge 1$, 
such that $M^r = 0$. The reader may verify the following.

\begin{lemma}
Suppose that $R$ is a $K$-algebra and $M \in \Mat_n(R)$. 
\begin{enumerate}

\item[(i)]
If $M$ is upper (lower) triangular, then $M$ is similar over $K$ to a lower 
(upper) triangular matrix $\tilde{M} \in \Mat_n(R)$

\item[(ii)]
If $\tilde{M}$ is similar over $K$ to $M$, then $\tilde{M}$ is nilpotent, 
if and only if $M$ is nilpotent.

\end{enumerate}
\end{lemma}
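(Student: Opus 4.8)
The plan is to treat the two parts independently, and in each case to exhibit a single explicit conjugating matrix over $K$, so that no structure theory is needed.

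For part (i), I would conjugate by the reversal (anti-identity) permutation matrix $P \in \GL_n(K)$ defined by $P_{ij} = 1$ when $i + j = n + 1$ and $P_{ij} = 0$ otherwise. This $P$ satisfies $P = P^{-1} = P\tp$, so conjugation by it is a genuine similarity over $K$. The key computation is the index-reversal identity $(PMP^{-1})_{ij} = M_{n+1-i,\,n+1-j}$. If $M$ is upper triangular, i.e. $M_{ij} = 0$ whenever $i > j$, then the entry $(PMP^{-1})_{ij}$ equals $M_{n+1-i,\,n+1-j}$, which vanishes whenever $n+1-i > n+1-j$, that is, whenever $i < j$; hence $\tilde{M} := PMP^{-1}$ is lower triangular. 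The lower-to-upper direction is identical after swapping the roles of the two triangle conditions (or by applying the upper-to-lower statement with $M$ and $\tilde{M}$ interchanged), so $\tilde{M}$ is the required matrix, similar to $M$ over $K$ and triangular of the opposite type.

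For part (ii), I would write $\tilde{M} = SMS^{-1}$ with $S \in \GL_n(K)$, which is legitimate because $K$ embeds into the $K$-algebra $R$, so $\GL_n(K) \subseteq \GL_n(R)$ and both $S$ and $S^{-1}$ are honest invertible elements of $\Mat_n(R)$. A telescoping of the product then gives $\tilde{M}^r = S M^r S^{-1}$ for every $r \ge 1$. Since $S$ and $S^{-1}$ are invertible in $\Mat_n(R)$, the matrix $S M^r S^{-1}$ is zero if and only if $M^r$ is zero. Consequently $\tilde{M}^r = 0$ holds for some $r \ge 1$ exactly when $M^r = 0$ holds for some $r \ge 1$; that is, $\tilde{M}$ is nilpotent if and only if $M$ is nilpotent.

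I do not expect either part to present a real obstacle, since both amount to bookkeeping. The only point that deserves a moment's care is the verification in (i) that $P = P^{-1}$ and that conjugation by $P$ reverses both index orders; once that identity is recorded, everything else reduces to the routine fact that conjugation by a fixed invertible matrix is a ring automorphism of $\Mat_n(R)$ that commutes with taking powers and sends the zero matrix to itself.
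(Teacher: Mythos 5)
Your proof is correct: the index-reversal identity $(PMP^{-1})_{ij} = M_{n+1-i,\,n+1-j}$ for the anti-identity permutation matrix settles (i), and the telescoping $\tilde{M}^r = SM^rS^{-1}$ settles (ii). The paper offers no proof of this lemma (it is left as an exercise, ``The reader may verify the following''), and your argument is exactly the routine verification intended.
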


If $M \in \Mat_{m,n}(K[x])$, then we write $M(v)$
for the matrix
$$
\left( \begin{array}{cccc}
M_{11}(v) & M_{12}(v) & \cdots & M_{1n}(v) \\
M_{21}(v) & M_{22}(v) & \cdots & M_{2n}(v) \\
\vdots & \vdots & \sdots\, & \vdots \\
M_{m1}(v) & M_{m2}(v) & \cdots & M_{mn}(v) 
\end{array} \right)
$$
where $v \in K^n$. We say that $M \in \Mat_m(K[x])$ is \emph{strongly nilpotent
(over $K$)} if there exists an $r \ge 1$, such that
$$
M(v^{(1)}) \cdot M(v^{(2)}) \cdot \cdots \cdot M(v^{(r)}) = 0
$$
for all $v^{(1)},v^{(2)},\ldots,v^{(r)} \in K^n$. If $K$ is infinite, 
then proposition \ref{stronglt} below gives a classification of strongly nilpotent 
matrices over $K[x]$. For the proof of proposition \ref{stronglt}, we need the 
following lemma, which one can show by induction on $r$.

\begin{lemma} \label{strongblock}
Suppose that $M \in \Mat_m(K[x])$ is of the form
$$
\left( \begin{array}{cc} A & \zeromat \\ * & B \end{array} \right)
\qquad \mbox{or} \qquad
\left( \begin{array}{cc} A & * \\ \zeromat & B \end{array} \right)
$$
Let $\tilde{M} := M(v^{(1)}) \cdot M(v^{(2)}) \cdot \cdots \cdot M(v^{(r)})$,
$\tilde{A} := A(v^{(1)}) \cdot A(v^{(2)}) \cdot \cdots \allowbreak \cdot A(v^{(r)})$ 
and $\tilde{B} := B(v^{(1)}) \cdot B(v^{(2)}) \cdot \cdots \cdot B(v^{(r)})$,
where $v^{(i)} \in K[x]^n$ for each $i$. Then $\tilde{M}$ is of the form.
\begin{equation} \label{strongeq}
\left( \begin{array}{cc} \tilde{A} & \zeromat \\ * & \tilde{B} \end{array} \right)
\qquad \mbox{or} \qquad
\left( \begin{array}{cc} \tilde{A} & * \\ \zeromat & \tilde{B} \end{array} \right)
\end{equation}
respectively.
\end{lemma}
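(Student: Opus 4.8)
The plan is to reduce the statement to the elementary fact that a product of block triangular matrices (all triangular in the same direction) is again block triangular, with diagonal blocks equal to the products of the corresponding diagonal blocks; this is then run as an induction on $r$. The crucial preliminary observation is that the off-diagonal zero block of $M$ consists of the constant polynomial $0$, so that for \emph{every} substitution $v^{(i)}$ the matrix $M(v^{(i)})$ retains exactly the block shape of $M$: its top-left block is $A(v^{(i)})$, its bottom-right block is $B(v^{(i)})$, and the block occupied by $\zeromat$ stays identically zero. Since the sizes of the blocks $A$ and $B$ are fixed and independent of $i$, all the factors $M(v^{(1)}), M(v^{(2)}), \ldots, M(v^{(r)})$ are conformably partitioned, so their product can be computed blockwise.

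First I would treat the block lower triangular case. The base case $r = 1$ is immediate. For the inductive step, set $\tilde{M}' := M(v^{(1)}) \cdot M(v^{(2)}) \cdot \cdots \cdot M(v^{(r-1)})$ and apply the induction hypothesis to write $\tilde{M}'$ in the stated block lower triangular form, with top-left block $\tilde{A}' := A(v^{(1)}) \cdot \cdots \cdot A(v^{(r-1)})$ and bottom-right block $\tilde{B}' := B(v^{(1)}) \cdot \cdots \cdot B(v^{(r-1)})$. Writing $M(v^{(r)})$ with top-left block $A(v^{(r)})$, bottom-right block $B(v^{(r)})$ and top-right block $\zeromat$, block multiplication gives
$$
\tilde{M} = \tilde{M}' \cdot M(v^{(r)}) = \left( \begin{array}{cc} \tilde{A}' A(v^{(r)}) & \tilde{A}' \cdot \zeromat + \zeromat \cdot B(v^{(r)}) \\ * & \tilde{B}' B(v^{(r)}) \end{array} \right) = \left( \begin{array}{cc} \tilde{A} & \zeromat \\ * & \tilde{B} \end{array} \right),
$$
since the top-right block collapses to $\zeromat$ and the diagonal blocks are exactly $\tilde{A}' A(v^{(r)}) = \tilde{A}$ and $\tilde{B}' B(v^{(r)}) = \tilde{B}$. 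This is the claimed form. The block upper triangular case is entirely symmetric: the roles of the two off-diagonal blocks are interchanged, and the identical computation shows the zero block is preserved while the diagonal blocks again multiply to $\tilde{A}$ and $\tilde{B}$.

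I do not expect any genuine obstacle, since this is a routine blockwise induction expressing that block triangularity is closed under multiplication. The only points requiring care are bookkeeping: ensuring the two diagonal blocks have fixed, $i$-independent sizes so that every product in sight is conformable, and noting that substituting polynomial vectors $v^{(i)} \in K[x]^n$ rather than points of $K^n$ changes nothing, because the zero block is constant and block multiplication is a purely formal identity over the commutative ring $K[x]$. Treating the two cases (lower and upper) uniformly, instead of repeating the computation, is the only mild efficiency concern.
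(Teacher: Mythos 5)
Your proof is correct and matches the paper's intended argument: the paper gives no written proof, stating only that the lemma ``one can show by induction on $r$,'' which is precisely the blockwise induction you carry out. The key observations you make --- that the zero block is the constant polynomial $0$ and hence survives every substitution, and that the blocks are conformably partitioned so the product can be computed blockwise --- are exactly what is needed.
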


\begin{proposition} \label{stronglt}
Suppose that $L$ is infinite and an extension field of $K$. Let $M \in \Mat_m(K[x])$. 
Then $M$ is strongly nilpotent over $L$, if and only if $M$ is similar over $K$ to a 
triangular matrix in $\Mat_m(K[x])$, whose principal diagonal is totally zero.
\end{proposition}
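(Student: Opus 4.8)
The plan is to prove the two implications separately; only the forward implication ``$\Rightarrow$'' will use that $L$ is infinite. For the easy direction, suppose $M$ is similar over $K$ to a matrix $\tilde M \in \Mat_m(K[x])$ that is triangular with zero diagonal, say $\tilde M = T^{-1} M T$ with $T \in \GL_m(K)$. For every $v \in L^n$ the matrix $\tilde M(v) = T^{-1} M(v) T$ is again triangular with zero diagonal, and a product of any $m$ such matrices vanishes, since a triangular matrix in $\Mat_m(L)$ with zero diagonal sends each subspace of a suitable complete flag into the previous one, so that $m$ of them annihilate the whole flag. Conjugating back by $T$ gives $M(v^{(1)}) \cdots M(v^{(m)}) = 0$ for all $v^{(i)} \in L^n$, so $M$ is strongly nilpotent over $L$ with $r = m$.

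For the forward direction I would show, by induction on $m$ (the $0 \times 0$ matrix being the trivial base), that $M$ is in fact similar over $K$ to an \emph{upper} triangular matrix with zero diagonal. The heart of the argument is the claim that a strongly nilpotent $M$ with $m \ge 1$ admits a nonzero $u \in K^m$ with $Mu = 0$ in $K[x]^m$. To prove the claim I would first produce such a vector over $L$: among all products $M(v^{(1)}) \cdots M(v^{(k)})$ with $v^{(i)} \in L^n$ that are nonzero, choose one, $P$, of maximal length $k$. This is possible because the length-$0$ product is the identity (nonzero as $m \ge 1$) while strong nilpotence forces every sufficiently long product to vanish. By maximality $M(v) P = 0$ for every $v \in L^n$, so any nonzero column $u' := P e_j$ of $P$ is a nonzero vector in $L^m$ with $M(v) u' = 0$ for all $v \in L^n$.

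Descending this common kernel vector from $L$ down to $K$ is the main obstacle. Writing $M = \sum_i C_i p_i(x)$ with $C_i \in \Mat_m(K)$ and distinct monomials $p_i$, and using that $L$ is infinite, the condition ``$M(v) w = 0$ for all $v \in L^n$'' is equivalent to ``$C_i w = 0$ for all $i$'', i.e.\ to $w$ lying in the common kernel of the $C_i$. That common kernel is the kernel of the single matrix over $K$ obtained by stacking the $C_i$ on top of one another, so its dimension is the same whether computed over $K$ or over $L$; as it is nonzero over $L$ (it contains $u'$), it already contains a nonzero $u \in K^m$. For this $u$ we have $M(v) u = 0$ for all $v \in L^n$, and hence $Mu = 0$ as a polynomial identity, proving the claim.

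To finish the induction, extend $u$ to a $K$-basis of $K^m$ and let $T \in \GL_m(K)$ be the matrix with these columns; then $T^{-1} M T$ has zero first column and therefore the block shape $\left(\begin{smallmatrix} 0 & * \\ 0 & B \end{smallmatrix}\right)$ with $B \in \Mat_{m-1}(K[x])$. Since similarity over $K$ preserves strong nilpotence over $L$, and since by Lemma \ref{strongblock} the lower-right block of a length-$r$ product of the matrices $(T^{-1} M T)(v^{(i)})$ equals the corresponding product of the $B(v^{(i)})$, the block $B$ is strongly nilpotent over $L$. By the induction hypothesis $B$ is similar over $K$ to an upper triangular matrix with zero diagonal; conjugating $T^{-1} M T$ by the block-diagonal matrix realising this similarity turns it into an upper triangular matrix with zero diagonal, and exhibits $M$ as similar over $K$ to such a matrix.
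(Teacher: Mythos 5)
Your proof is correct and follows essentially the same route as the paper's: induction on $m$, extracting a common kernel vector over $L$ from a maximal-length nonzero product, descending it to $K$, and then peeling off a zero column of $T^{-1}MT$ using lemma \ref{strongblock}. The only cosmetic differences are that you perform the descent via the rank of the stacked coefficient matrices (where the paper simply notes that $L$-linear dependence of the columns, which lie in $K[x]^m$, implies $K$-linear dependence), and that you arrange an upper rather than lower triangular matrix.
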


\begin{proof}
The `if'-part is an straightforward exercise, so assume that $M$ is strongly nilpotent 
over $L$. Then there are $v^{(2)}, v^{(3)}, \ldots, v^{(r)} \in L^n$ such that for all 
$v^{(1)} \in L^n$,
$$
M(v^{(1)}) \cdot M(v^{(2)}) \cdot \cdots \cdot M(v^{(r)}) = 0 \ne
M(v^{(2)}) \cdot M(v^{(3)}) \cdot \cdots \cdot M(v^{(r)})
$$
Since $L$ is infinite, it follows that 
$$
M \cdot M(v^{(2)}) \cdot \cdots \cdot M(v^{(r)}) = 0 \ne
M(v^{(2)}) \cdot M(v^{(3)}) \cdot \cdots \cdot M(v^{(r)})
$$
so the columns of $M$ are linearly dependent over $L$. Since $L$ is a vector space over 
$K$, the columns of $M$ are linearly dependent over $K$. Hence $M$ is similar over $K$
to a matrix $\tilde{M} \in \Mat_m(K[x])$, of which the last column is zero. 

From lemma \ref{strongblock}, it follows that the upper left submatrix of 
size $(m-1) \times (m-1)$ of $\tilde{M}$ strongly nilpotent. By induction on 
$m$, it follows that $\tilde{M}$ and hence also $M$ is similar over $K$ 
to a lower triangular matrix in $\Mat_m(K[x])$, whose principal diagonal is totally 
zero.
\end{proof}

The above proof has been extracted from that of \cite[Th.~3.1]{MR3177043}, 
which is a more general result.

\begin{corollary} \label{blocklt}
Suppose that $M \in \Mat_m(K[x])$ is of the form
$$
\left( \begin{array}{cc} A & \zeromat \\ * & B \end{array} \right)
\qquad \mbox{or} \qquad
\left( \begin{array}{cc} A & * \\ \zeromat & B \end{array} \right)
$$
Then $M$ is similar over $K$ to a triangular matrix with only zeroes
on its principal diagonal, if and only if $A$ and $B$ are similar over $K$
to triangular matrices with only zeroes on their principal diagonals.
\end{corollary}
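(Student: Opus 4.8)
The plan is to reduce both conditions to strong nilpotency and then exploit the block structure through Lemma \ref{strongblock}. Concretely, I would first fix an infinite extension field $L$ of $K$; such a field always exists, e.g.\ the rational function field $L = K(t)$. By Proposition \ref{stronglt}, for any square matrix $N$ over $K[x]$ the condition ``$N$ is similar over $K$ to a triangular matrix with totally zero diagonal'' is equivalent to ``$N$ is strongly nilpotent over $L$''. Applying this equivalence to $M$, to $A$ and to $B$ separately, the corollary becomes the purely multiplicative statement that $M$ is strongly nilpotent over $L$ if and only if both $A$ and $B$ are.

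For the forward direction I would assume $M$ is strongly nilpotent over $L$, say every product of $r$ evaluations $M(v^{(1)}) \cdot M(v^{(2)}) \cdots M(v^{(r)})$ vanishes. By Lemma \ref{strongblock} each such product is again block triangular with diagonal blocks $\tilde{A} = A(v^{(1)}) \cdots A(v^{(r)})$ and $\tilde{B} = B(v^{(1)}) \cdots B(v^{(r)})$. Since the whole product is zero, its diagonal blocks $\tilde{A}$ and $\tilde{B}$ vanish as well, and this holds for every choice of the $v^{(i)} \in L^n$. Hence $A$ and $B$ are each strongly nilpotent over $L$ (with index at most $r$), which is exactly what is needed. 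This direction is essentially immediate from Lemma \ref{strongblock}.

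The reverse direction is where the off-diagonal block must be controlled, and I expect it to be the main obstacle. Here I would avoid a direct product computation and argue by an explicit conjugation instead. Suppose $A$ and $B$ are similar over $K$ to triangular matrices with zero diagonal; using part (i) of the first lemma I may assume both are brought to the \emph{same} orientation as the block form of $M$, say $A' = S_A A S_A^{-1}$ and $B' = S_B B S_B^{-1}$ with $A', B'$ lower triangular and zero diagonal when $M = \left(\begin{smallmatrix} A & 0 \\ * & B\end{smallmatrix}\right)$ (and upper triangular in the other case). Conjugating $M$ by the block-diagonal matrix $S = \mathrm{diag}(S_A, S_B) \in \GL_m(K)$ leaves the block-triangular shape intact while replacing the diagonal blocks by $A'$ and $B'$. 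The resulting matrix is then genuinely lower triangular with zero diagonal: its diagonal consists of the (zero) diagonals of $A'$ and $B'$, the block $0$ sits strictly in the upper-right, and the possibly nonzero block $*$ lies entirely below the diagonal. The two block-orientation cases are symmetric.

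Alternatively, one can stay within the strong-nilpotency picture for the reverse direction as well: if $A$ is killed by products of length $r_A$ and $B$ by products of length $r_B$, then expanding the off-diagonal block of a length-$r$ product of $M$-evaluations as a sum of terms, each a product of some $B$-factors, one off-diagonal factor, and some $A$-factors, a pigeonhole argument shows that the choice $r = r_A + r_B$ forces every such term, as well as both diagonal blocks, to vanish. Either route closes the corollary; I would present the conjugation argument as the cleaner one.
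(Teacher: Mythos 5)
Your proposal is correct. The \emph{only if} direction coincides with the paper's: both of you read off from Lemma \ref{strongblock} that the diagonal blocks of a vanishing product of evaluations of $M$ are the corresponding products of evaluations of $A$ and $B$, so strong nilpotency over an infinite extension field passes to $A$ and $B$, and Proposition \ref{stronglt} translates this back into triangularizability. For the \emph{if} direction you take a genuinely different route: instead of staying inside the strong-nilpotency framework, you conjugate $M$ by the block-diagonal matrix $\mathrm{diag}(S_A,S_B)$ and observe that the result is literally triangular with zero diagonal once $A$ and $B$ have been triangularized in the orientation matching the block shape of $M$; this is correct, since the off-diagonal block stays strictly on one side of the diagonal and the zero block stays zero, and it is more elementary than the paper's argument --- it needs neither an infinite extension field nor the nontrivial direction of Proposition \ref{stronglt}. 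The paper instead shows that a product of $2r$ evaluations of $M$ vanishes, because each half of the product is, by Lemma \ref{strongblock}, block-triangular with both diagonal blocks zero, and two such matrices multiply to zero; your alternative pigeonhole argument with $r = r_A + r_B$ achieves the same thing term by term, at the cost of a slightly longer computation. The only point worth making explicit in your conjugation route is that flipping the orientation of a triangular matrix via part (i) of the paper's first lemma preserves the property of having only zeroes on the diagonal, which holds because conjugation by the reversal permutation merely reverses the order of the diagonal entries.
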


\begin{proof}
From lemma \ref{strongblock}, it follows that $A$ and $B$ are strongly nilpotent over 
an infinite extension field $L$ of $K$ if $M$ is strongly nilpotent over $L$.
Hence the `only if'-part follows from proposition \ref{stronglt}.

To prove the `if'-part, suppose that $A$ and $B$ are strongly nilpotent over 
an infinite extension field $L$. Then there exists an integer $r$, such that
$\tilde{A} = \tilde{B} = 0$ in \eqref{strongeq}, for every $v^{(1)}, v^{(2)},
\ldots, v^{(r)} \in L^n$. It follows that 
$$
\big(M(v^{(1)}) \cdot M(v^{(2)}) \cdot \cdots \cdot M(v^{(r)})\big) \cdot
\big(M(v^{(r+1)}) \cdot M(v^{(r+2)}) \cdot \cdots \cdot M(v^{(2r)})\big) = 0
$$
for all $v^{(1)},v^{(2)},v^{(3)},\ldots,v^{(2r-1)},v^{(2r)} \in K^n$. Hence the
`if'-part follows from proposition \ref{stronglt} as well.
\end{proof}

A \emph{minor (determinant)} of a matrix $M$ is the determinant of a square
submatrix, say $N$, of $M$. The submatrix $N$ itself is called a 
\emph{minor matrix}. If the entries of the (principal) diagonal of $N$ lie
on the (principal) diagonal of $M$ as well, then $\det N$ and $N$ are called a
\emph{principal minor (determinant)} and a \emph{principal minor matrix}
respectively. 

Notice that a minor matrix $N$ of $M$ is a principal minor matrix of $M$, if and only if
the indices of the rows of $M$ which $N$ has as a submatrix of $M$ are the same as 
the indices of the columns of $M$ which $N$ has as a submatrix. 

We call a principal minor matrix of $M$ a \emph{leading principal minor matrix} if it extends
from the upper left corner of $M$, and a \emph{trailing principal minor matrix} if it extends
from the lower right corner of $M$. Similarly, we define \emph{leading principal minor 
(determinant)} and \emph{trailing principal minor (determinant)}.

We call $\lambda$ an \emph{eigenvalue} of $M \in \Mat_m(K)$ if there exists
a nonzero $v \in K^m$ such that $M v = \lambda v$. Notice that we must view 
the vector $v$ as a matrix with only one column in $M v = \lambda v$. We will
regard vectors as matrices with only one column in the rest of the paper as well.

\begin{lemma} \label{eigen}
Suppose that $M \in \Mat_m(K)$. Then the following statements are equivalent.
\begin{enumerate}

\item[(1)] $M$ is nilpotent;

\item[(2)] every eigenvalue of $M$ equals zero;

\item[(3)] for every $r \le m$, the sum of the principal minor determinants
of size $r \times r$ is zero.

\end{enumerate}
\end{lemma}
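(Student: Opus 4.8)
The plan is to prove the lemma by establishing the cycle of implications $(1) \Rightarrow (2) \Rightarrow (3) \Rightarrow (1)$, or perhaps the slightly more convenient route $(1) \Leftrightarrow (2)$ together with $(2) \Leftrightarrow (3)$. The cleanest organizing tool is the characteristic polynomial $\chi_M(t) = \det(t I - M)$, whose coefficients are, up to sign, exactly the sums of the principal minor determinants of each size. Explicitly, writing $\chi_M(t) = t^m - c_1 t^{m-1} + c_2 t^{m-2} - \cdots \pm c_m$, the coefficient $c_r$ is the sum of the $r \times r$ principal minor determinants of $M$. Thus statement $(3)$ is precisely the assertion that $\chi_M(t) = t^m$, and the whole lemma reduces to showing that $M$ is nilpotent if and only if its characteristic polynomial is $t^m$ if and only if all its eigenvalues vanish. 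A subtlety to flag at the outset is that $K$ is an arbitrary field, not assumed algebraically closed, so ``eigenvalue of $M$'' in statement $(2)$ must mean an eigenvalue \emph{lying in $K$}, and I must be careful that all three conditions are insensitive to passing to the algebraic closure $\bar{K}$.

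First I would handle $(1) \Rightarrow (2)$, which is immediate: if $Mv = \lambda v$ with $v \neq 0$, then $M^r v = \lambda^r v$, so $M^r = 0$ forces $\lambda^r = 0$ and hence $\lambda = 0$. Next, for $(1) \Leftrightarrow (3)$, I would pass to $\bar{K}$, where $M$ is similar over $\bar{K}$ to an upper triangular matrix (Schur/Jordan form) whose diagonal entries are the eigenvalues $\lambda_1, \ldots, \lambda_m$ counted with multiplicity. The characteristic polynomial factors as $\prod_{i=1}^m (t - \lambda_i)$, and $M$ is nilpotent precisely when the triangular form has zero diagonal, i.e.\ when every $\lambda_i = 0$, i.e.\ when $\chi_M(t) = t^m$. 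Since $\chi_M$ has coefficients in $K$ and is unchanged by base extension, the condition $\chi_M(t) = t^m$ is equivalent to $c_1 = c_2 = \cdots = c_m = 0$, which is statement $(3)$. This gives $(3) \Leftrightarrow (1)$, and by Cayley--Hamilton the implication $(3) \Rightarrow (1)$ can also be seen directly: if $\chi_M(t) = t^m$ then $M^m = 0$.

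The remaining implication is $(2) \Rightarrow (3)$ (or equivalently $(2) \Rightarrow (1)$), and this is where one must be a little careful, because $(2)$ only refers to eigenvalues in $K$, and a priori $M$ could have no eigenvalues in $K$ at all while having nonzero eigenvalues in $\bar{K}$. The key observation is that the constant term of $\chi_M$ is $(-1)^m \det M$, so if $M$ has \emph{no} eigenvalue equal to zero and is also not nilpotent, I need to rule out the pathological case where $M$ has a nonzero eigenvalue over $\bar{K}$ but statement $(2)$ is vacuously about $K$. The correct reading—consistent with the definition of eigenvalue given just before the lemma, which only requires $v \in K^m$—must be reconciled by noting that $\chi_M$ factors over $K$ into a product of the form $t^k \cdot g(t)$ where $g(0) \neq 0$; the multiplicity $k$ of the root $0$ equals the dimension of the generalized zero-eigenspace, which is defined over $K$. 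If statement $(2)$ holds, then $0$ is the only eigenvalue of $M$ \emph{in $K$}; I would argue that over $\bar{K}$ the nonzero eigenvalues, being roots of $g$, come in Galois-conjugate packets, and that the presence of any such eigenvalue would produce a nonzero coefficient contradicting either $(3)$ or the structure forced by the argument. The honest way to close the loop is to prove $(2) \Rightarrow (1)$ over $\bar{K}$: an eigenvalue of $M$ over $\bar{K}$ is algebraic over $K$, its Galois orbit consists of eigenvalues of $M$, and one shows the minimal polynomial of $M$ has the form $t^s$. I expect the main obstacle to be precisely this characteristic-free, not-algebraically-closed handling of the eigenvalue condition; the cleanest fix is to interpret $(2)$ as ``every eigenvalue of $M$ over $\bar{K}$ is zero'' and note that this is equivalent to the $K$-statement because the zero eigenspace is defined over $K$, after which Schur triangularization over $\bar{K}$ delivers all three equivalences uniformly.
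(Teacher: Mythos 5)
Your treatment of the implications that are actually provable is correct and close to the paper's: you identify (3) with the vanishing of all non-leading coefficients of the characteristic polynomial, obtain $(3)\Rightarrow(1)$ from Cayley--Hamilton exactly as the paper does (the paper works with $f(u)=\det(uI_m+M)$ rather than $\det(tI_m-M)$, a cosmetic difference), prove $(1)\Rightarrow(2)$ by the standard computation $M^rv=\lambda^rv$ (the paper instead conjugates so that the eigenvector becomes $e_1$ and invokes lemma \ref{strongblock}), and prove $(1)\Leftrightarrow(3)$ by triangularizing over $\bar K$, which the paper does not do directly (it routes $(1)\Rightarrow(3)$ through (2)).

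The step where you stall, getting from (2) back to (1) or (3), is a genuine gap in your write-up --- but the difficulty you sensed is real and cannot be argued away. With the definition of eigenvalue used here (a nonzero eigenvector in $K^m$ is required), the implication $(2)\Rightarrow(3)$ is \emph{false} for non-algebraically-closed $K$: for $M=\left(\begin{smallmatrix}0&-1\\ 1&0\end{smallmatrix}\right)$ over $\mathbb{Q}$, statement (2) holds vacuously since $M$ has no eigenvalues in $\mathbb{Q}$ at all, yet $\det M=1\neq 0$, so (3) fails and $M$ is not nilpotent. Consequently your closing claim --- that ``every eigenvalue of $M$ over $\bar K$ is zero'' is equivalent to the $K$-statement because the zero eigenspace is defined over $K$ --- is wrong: rationality of the kernel only controls whether $0$ is an eigenvalue, and says nothing about nonzero eigenvalues lying in $\bar K\setminus K$; the Galois-orbit sketch cannot be completed for the same reason. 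For what it is worth, the paper's own proof of $(2)\Rightarrow(3)$ commits precisely the non-sequitur you flagged: from ``$u=-0$ is the only root of $f$'' it concludes $f=u^m$, which requires $K$ to be algebraically closed. The equivalence is only salvageable by reading (2) as a statement about eigenvalues over $\bar K$; note that the paper only ever uses $(1)\Rightarrow(3)$, which your $\bar K$-triangularization argument does establish.
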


\begin{listproof}
\begin{description}

\item[(1) \imp (2)] Suppose that $\lambda$ is an eigenvalue of $M$.
Then $M$ is similar over $K$ to a matrix $\tilde{M}$ for which 
$\tilde{M} e_1 = \lambda e_1$. Since $\tilde{M}$ is nilpotent as well,
it follows from lemma \ref{strongblock} that $\lambda = 0$.

\item[(2) \imp (3)] Let $f(u) = \det (u I_m + M)$. Then
for every $r$ for which $1 \le r \le m$, the coefficient of
$u^{m-r}$ in $f$ equals the sum of the principal minor determinants
of size $r \times r$ of $M$. 

If $f(-\lambda) = 0$, then $\ker (-\lambda I_m + M) \ne \{0\}$, 
and $\lambda$ is an eigenvalue of $M$. Suppose that (2) holds. 
Then $u = -0$ is the only root of $f$. 
Hence $f = u^m$ and (3) follows.

\item[(3) \imp (1)]
Suppose that (3) holds. Then $f = u^m$. From the Cayley-Hamilton
theorem, it follows that $(-M)^m = 0$, which gives (1). \qedhere

\end{description}
\end{listproof}

Suppose that $M = \jac H$ and $\tilde{M} = SMT$, where $H$ is a polynomial map 
from $K^n$ to $K^m$, $S \in \GL_m(K)$ and $T \in \GL_n(K)$. 
Let $\tilde{H} := SH(Tx)$. From the chain rule, it follows that
\begin{equation} \label{chain}
\jac \tilde{H} = \jac \big(SH(Tx)\big) = SM|_{x=Tx}T = \tilde{M}|_{x=Tx}
\end{equation}
so $\tilde{M}$ itself is a Jacobian matrix up to an automorphism of $K[x]$.
It follows that $\jac \tilde{H}$ is (strongly) nilpotent or upper (lower) 
triangular, if and only if $\tilde{M}$ is (strongly) nilpotent or upper 
(lower) triangular respectively.

The \emph{degree} of a matrix $M \in \Mat_{m,n}(K[x])$ is defined by
$$
\deg M := \max \{ \deg M_{11}, \deg M_{12} ,\ldots, \deg M_{1n},
\deg M_{21}, \deg M_{22}, \ldots, \deg M_{mn} \}
$$
and we write $\rk M$ for the rank of $M$.

Let $M \in \Mat_{m,n}(R)$, where $R$ is a ring. Then we denote
by $M\tp$ the \emph{transpose} of $M$, so
$$
M\tp = \left( \begin{array}{cccc}
M_{11} & M_{12} & \cdots & M_{1n} \\
M_{21} & M_{22} & \cdots & M_{2n} \\
\vdots & \vdots & \sdots\, & \vdots \\
M_{m1} & M_{m2} & \cdots & M_{mn} 
\end{array} \right)\tp := \left( \begin{array}{cccc}
M_{11} & M_{21} & \cdots & M_{m1} \\
M_{12} & M_{22} & \cdots & M_{m2} \\
\vdots & \vdots & \sdots\, & \vdots \\
M_{1n} & M_{2n} & \cdots & M_{mn} 
\end{array} \right)
$$
Notice that the symbol t is upright to distinguish from taking the $t$\oth power.
We call $M$ \emph{symmetric} if $M\tp = M$ and \emph{antisymmetric} if $M\tp = -M$.

Suppose that $K$ is a subfield of a field $L$. Then we write $\trdeg_K L$
for the \emph{transcendence degree} of $L$ over $K$, i.e.\@
$$
\trdeg_K L = \max\{\# S \mid S \subseteq L \mbox{ and $S$ is algebraically independent over $K$} \,\} 
$$
Here, a subset $S$ of $L$ is \emph{algebraically independent over $K$} if the result
of substituting elements of $S$ in a nonzero polynomial in finitely many variables
over $K$ will never be zero.

\paragraph{\S \ref{srk2}}
In section \ref{srk2}, we classify all matrices $M \in \Mat_{m,n}(K[x])$ 
for which $\deg M = 1$ and $\rk M \le 2$. Furthermore, we classify all 
such matrices $M$ such that $M = \jac H$ for some polynomial map $H$.
Among other things, we show that $M = \jac H$ is equivalent over $K$ 
to a matrix $\tilde{M}$ which has either only two nonzero columns or 
only three nonzero rows.

In addition, we show that $\rk \jac H = \trdeg_K K(H)$ for quadratic polynomial 
maps $H$ over $K$ such that $\frac12 \in K$ and $\rk \jac H \le 2$.
In general $\rk \jac H \le \trdeg_K K(H)$ for a rational map $H$ of any 
degree, with equality if $K(H) \subseteq K(x)$ is separable, in particular if
$K$ has characteristic zero. This is proved in \cite[Th.\@ 1.3]{1501.06046},
see also \cite[Ths.\@ 10, 13]{DBLP:conf/mfcs/PandeySS16}.

\paragraph{\S \ref{srk2n}}
In section \ref{srk2n}, we prove that nilpotent Jacobian matrices $N$
for which $\deg N = 1$ and $\rk N \le 2$ are similar over $K$ to
a triangular matrix (with zeroes on the diagonal), regardless of
the characteristic of $K$. This generalizes \cite[Th.~3.4]{MR3271211}
(the case where $K$ has characteristic zero) and \cite[Th.~1]{MR3327129}
(the case where $\frac12 \in K$ and $N(0) = 0$).
In \cite[Th.~1]{MR3327129}, which is the main result of \cite{MR3327129}, the 
authors additionally assume that $K$ is infinite, but one can derive the finite
case from the infinite case by way of proposition \ref{stronglt} above.

At the end of section \ref{srk2n}, we prove that nilpotent Jacobian matrices $N$
for which $\deg N = 1$ and $N^2=0$ are similar over $K$ to
a triangular matrix (with zeroes on the diagonal), regardless of
the characteristic of $K$. This generalizes \cite[\S 4]{MR1129181} and 
\cite[Lm.~4]{MR3327129} (the case where $\frac12 \in K$ and $N(0) = 0$).

We additionally show that $N(v^{(1)}) \cdot N(v^{(2)}) \cdot N(v^{(3)}) = 0$ 
if $\frac12 \in K$,  where $v^{(1)},v^{(2)},v^{(3)}$ are as in the proof of 
proposition \ref{stronglt}, using the fact that the proof of 
\cite[Lm.~4]{MR3327129} shows that $N(v^{(1)}) \cdot N(v^{(2)}) = 0$ 
if $N(0) = 0$ in addition.

\section{(Jacobian) matrices of degree one and rank at most two} \label{srk2}

A matrix of rank zero can only be the zero matrix, so we only need to 
distinguish rank one and rank two. Let us start with rank one.

\begin{theorem} \label{quadrk1}
Let $M$ be a matrix whose entries are polynomials of degree at most $1$ over $K$.  
If $\rk M = 1$, then $M$ is equivalent over $K$ to a matrix $\tilde{M}$ 
for which one of the following statements holds.
\begin{enumerate}[\upshape\bf (1)]

\item Only the first column of $\tilde{M}$ is nonzero.

\item Only the first row of $\tilde{M}$ is nonzero.

\end{enumerate}
If $M$ is the Jacobian matrix of a (quadratic) polynomial map in addition, 
then the following assertion can be added to {\upshape (1)}.
\begin{enumerate}[\upshape\bf (1)]

\item The first column of $\tilde{M}$ is of the form $(*,\frac12,0,\ldots,0)$.

\end{enumerate}
\end{theorem}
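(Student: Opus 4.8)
The plan is to reduce the hypothesis $\rk M = 1$ to a rank statement over the field of fractions $K(x)$ and then exploit that the entries have degree at most $1$. Since $\rk M = 1$, the columns of $M$ span a one-dimensional subspace of $K(x)^m$. I would fix a generator $w \in K[x]^m$ of this subspace whose entries are coprime (take any nonzero column, which already has polynomial entries, and divide out the gcd). Then every column $c_j$ of $M$ equals $f_j w$ for some $f_j \in K(x)$, and a short divisibility argument (writing $f_j = p/q$ in lowest terms and using that $K[x]$ is a UFD together with coprimality of the entries of $w$) shows that in fact $f_j \in K[x]$.

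The degrees now decide the dichotomy. The gcd divided out above has degree at most $1$; if it has degree $1$ then $w$ must be a constant vector, and otherwise $w$ has entries of degree at most $1$. So $w$ is either constant or has an entry of degree exactly $1$. In the first case every column is an affine multiple $f_j w$ of the constant vector $w$; choosing $S \in \GL_m(K)$ with $Sw = e_1$ makes $SM$ have only its first row nonzero, giving (2). In the second case, comparing degrees in $c_{j,i} = f_j w_i$ at an index $i$ with $\deg w_i = 1$ forces each $f_j$ to be a scalar, so $M = w f\tp$ with $f \in K^n$ constant; choosing $T \in \GL_n(K)$ with $f\tp T = e_1\tp$ makes $MT$ have only its first column nonzero, giving (1). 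Note $M \ne 0$, so $w \ne 0$ and $f \ne 0$.

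For the Jacobian refinement I stay in the second case, where $M = \jac H = w f\tp$ with $w$ affine and $f \in K^n$ constant, so $\partial H_i/\partial x_j = w_i f_j$. Equality of the mixed second partials of $H_i$ yields $f_j\,\partial w_i/\partial x_k = f_k\,\partial w_i/\partial x_j$ for all $j,k$; since each $w_i$ is affine, its gradient is a constant vector, and this relation says that vector is parallel to $f$. Hence $\nabla w_i = c_i f$ for a scalar $c_i$, and antidifferentiating gives $w_i = a_i + c_i \phi$ with $\phi := f\tp x$, that is, $w = a + c\phi$ for constant vectors $a,c \in K^m$. Coprimality of the entries of $w$ forces $a$ and $c$ to be linearly independent: otherwise $a = \lambda c$ gives $w = (\lambda+\phi)c$, whose entries share the non-unit factor $\lambda+\phi$ of degree $1$ — which is exactly the constant-generator situation already absorbed into (2).

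Finally I would normalize. For $S \in \GL_m(K)$ and $T \in \GL_n(K)$ one has $SMT = (Sw)(f\tp T)$. Taking $T$ with $f\tp T = e_1\tp$ and, using independence of $a,c$, taking $S$ with $Sc = e_1$ and $Sa = \frac12 e_2$, the product $SMT$ has only its first column nonzero, equal to $Sw = \frac12 e_2 + (f\tp x)\,e_1 = (f\tp x,\ \frac12,\ 0,\ldots,0)\tp$, which is the asserted form $(*,\frac12,0,\ldots,0)$. I expect the two delicate points to be, first, the passage from the Jacobian hypothesis to the rigid structure $w = a + c\phi$ through the integrability relation, and second, the status of the constant $\frac12$: as a pure equivalence any nonzero value would do, but $\frac12$ is the coefficient produced by antidifferentiating $\phi\,\partial\phi/\partial x_j$ into $\frac12\phi^2$, so with this choice $\tilde M$ is the Jacobian of $(\frac12\phi^2,\frac12\phi,0,\ldots,0)$ up to coordinates. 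This requires $\frac12 \in K$; in characteristic $2$ one checks that $\partial_j^2 H_i = c_i f_j^2$ must vanish while some $f_j \ne 0$, forcing $c = 0$, so this sub-case does not occur for Jacobian matrices and the refinement is vacuous there.
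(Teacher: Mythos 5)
Your argument is correct, but it proceeds quite differently from the paper. The paper first arranges $M(0)\ne 0$ by a translation $x_i\mapsto x_i+1$, normalizes $M(0)$ by equivalence, and reads the dichotomy off the vanishing of the linear parts of the $2\times 2$ minors; for the Jacobian refinement it integrates $\tilde M$ back to $\tilde H$ and performs row operations on the components $\tilde H_j\in Kx_1+Kx_1^2$, treating $\frac12\notin K$ by observing that the first column is then forced to be constant. You instead exploit that $K[x]$ is a UFD to produce a genuine rank-one factorization $M=wf\tp$ with $w$ primitive (coprime entries), decide between ``column type'' and ``row type'' by a degree count on $f_jw_i$, and then use the symmetry of mixed partials as an integrability condition to force $w=a+c\,(f\tp x)$ with $a,c\in K^m$ linearly independent, from which the normal form $(*,\frac12,0,\ldots,0)$ follows by choosing $S$ with $Sc=e_1$, $Sa=\frac12 e_2$. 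Your treatment of characteristic $2$ (where $\partial_j^2\equiv 0$ forces $c=0$, so the non-constant-$w$ case cannot be a Jacobian matrix and one always lands in (2)) matches the paper's conclusion by a different computation. Two small points you should make explicit in a write-up: the case $c=0$ (respectively $a=0$) of linear dependence of $a,c$ needs to be mentioned alongside $a=\lambda c$, though both are immediate; and the primitive generator $w$ is unique up to a scalar in $K^*$, so the dichotomy does not depend on which nonzero column you start from. What your approach buys is a cleaner, coordinate-free argument and the extra structural information $M=wf\tp$; what it costs is generality --- the paper's minor-determinant method is the one that extends to the rank-two classification in its Theorem \ref{quadrk2}, where no such two-factor decomposition is available.
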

 
\begin{proof}
If the constant part $M(0)$ of $M$ is zero, then we can replace
$M$ by the result of substituting $x_i = x_i + 1$ in $M$,
where $x_i$ is an indeterminate which appears in $M$, 
to obtain $M(0) \ne 1$. So we may assume that $M(0) \ne 0$.

Since $\rk M(0) = 1$, we can choose $\tilde{M}$ such that 
$$
\tilde{M}(0) = \left( \begin{array}{cccc}
1 & 0 & 0 & \cdots \\
0 & 0 & 0 & \cdots \\
0 & 0 & 0 & \cdots \\
\vdots & \vdots & \vdots & \sdots
\end{array} \right)
$$
Looking at the linear parts of the $2 \times 2$ minor determinants, 
we see that only the first row and the first column of $\tilde{M}$ may be nonzero. 

Suppose that (1) does not hold.
Then we may assume that the second entry of the first row of $\tilde{M}$ is nonzero.
Suppose that (2) does not hold.
Then we may assume that the first entry of the second row of $\tilde{M}$ is nonzero.
This contradicts that the leading principal $2 \times 2$ minor determinant
of $\tilde{M}$ is zero.

So we have proved the first part of this theorem. To prove the second part
of this theorem, assume that $M = \jac H$ for a polynomial
map $H$. If we remove terms $x_1^{k_1}x_2^{k_2}\cdots$ of $H$ for which 
$\frac1{k_i} \notin K$ for all $i$, then $M = \jac H$ is preserved, and
for every term $t$ of $H$, there exists an $i$ such that
$$
\frac{\partial}{\partial x_i} t \ne 0 
$$
Since $\deg \jac H \le 1$, it follows that $H$ becomes a polynomial map with terms 
of degree $1$ and $2$ only. So we may assume that $H(0) = 0$ and $\deg H \le 2$.

Say that $\tilde{M} = SMT$ for invertible matrices $S, T$ over $K$.
Let $\tilde{H} := SH(Tx)$ and suppose that $\tilde{M}$ is as in (1). 
From \eqref{chain}, it follows that $\jac \tilde{H}$ is as in (1) as well, 
i.e.\@ only the first column of $\jac \tilde{H}$ is nonzero.

If $\frac12 \in K$, then for all $j$, $\tilde{H}_j$ is linearly
dependent over $K$ on $x_1^2$ and $x_1$ only. If $x_1$ and $x_1^2$ are in turn
linearly dependent over $K$ on $\tilde{H}_1, \tilde{H}_2, \cdots$, then we can 
get the first column of $\jac \tilde{H}$ and $\tilde{M}$ of the given form by 
way of row operations.
Otherwise, we can get the first column of $\jac \tilde{H}$ and $\tilde{M}$
of the form $(*,0,0,\cdots,0)$ by way of row operations, so (2) is satisfied.

So assume that $\frac12 \notin K$. Then for all $j$, $\tilde{H}_j$ is dependent 
over $K$ on $x_1, x_1^2, x_2^2, \allowbreak x_3^2, \ldots$. Hence the first
column of $\jac \tilde{H}$ is constant. By way of row operations, we can get 
the first column of $\jac \tilde{H}$ and $\tilde{M}$ of the form $(*,0,0,\cdots,0)$. 
Hence only the first row of $\jac \tilde{H}$ is nonzero and $\tilde{M}$ is as in (2).
\end{proof}

In \cite[Th.~1.8]{MR3179983}, it is proved that over fields of 
characteristic zero, polynomial maps with an antisymmetric Jacobian matrix are 
linear. With essentially the same proof, one can draw the same conclusion if 
the characteristic of the field exceeds the degree of the polynomial map.

\begin{lemma} \label{antisym}
Let $H$ be a polynomial map of degree at most $d$ over $K$, such that 
$d! \ne 0$ in $K$. If $\jac H$ is antisymmetric, then $\deg H \le 1$.
\end{lemma}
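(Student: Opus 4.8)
The plan is to reduce to the homogeneous situation and then play Euler's identity against the antisymmetry. First I would write each coordinate $H_i = \sum_{k \ge 0} H_i^{(k)}$ as a sum of its homogeneous parts and set $H^{(k)} := (H_1^{(k)}, H_2^{(k)}, \ldots)$. Since $\partial_j H_i^{(k)}$ is homogeneous of degree $k-1$, the hypothesis $(\jac H)\tp = -\jac H$ holds degree by degree, so $\jac H^{(k)}$ is antisymmetric for every $k$. Thus it suffices to show that $H^{(k)} = 0$ whenever $2 \le k \le d$. From now on I would therefore assume that $H$ is homogeneous of degree $k$ with $2 \le k \le d$ and aim at $H = 0$; the conclusion $\deg H \le 1$ follows by collecting these vanishings.

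The core computation introduces the single polynomial $g := \sum_j x_j H_j$, which is homogeneous of degree $k+1$. Differentiating gives $\partial_i g = H_i + \sum_j x_j\, \partial_i H_j$, and substituting the antisymmetry $\partial_i H_j = -\partial_j H_i$ turns the remaining sum into $-\sum_j x_j\, \partial_j H_i = -k H_i$ by Euler's formula applied to the degree-$k$ polynomial $H_i$. Hence $\partial_i g = (1-k) H_i$ for every $i$. Since $k-1 \ne 0$ in $K$, this exhibits $H = (1-k)^{-1} \nabla g$ as a scalar multiple of a gradient, so $\jac H$ equals $(1-k)^{-1}$ times the Hessian of $g$ and is in particular \emph{symmetric}. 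Being simultaneously symmetric and antisymmetric, $\jac H$ satisfies $2\, \jac H = 0$, and as $2 \ne 0$ in $K$ we obtain $\jac H = 0$. A final application of Euler's formula yields $k H_i = \sum_j x_j\, \partial_j H_i = 0$, and $k \ne 0$ forces $H_i = 0$, as desired.

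The one point that needs genuine care — and the only place where the hypothesis $d! \ne 0$ is really used — is verifying that every integer scalar dividing through above is nonzero in $K$. Because $d! \ne 0$ forces $K$ to have characteristic $0$ or characteristic $p > d$, each of $k$, $k-1$ and $2$ lies strictly below $p$ (using $2 \le k \le d$), hence is invertible. I expect this bookkeeping, rather than the algebra, to be the main obstacle: one must confirm that the degreewise reduction in the first step is compatible with the divisions by $1-k$, by $2$, and by $k$, each of which survives precisely under $d! \ne 0$. Once this is in place the argument closes immediately. I note that the alternative of applying Euler directly to $g$, which gives $(k+1)g = (1-k)g$ and hence $2k\, g = 0$, works equally well; there the only extra check is that $p \nmid 2k$, which holds since an odd prime $p > k \ge 2$ cannot divide $2k$.
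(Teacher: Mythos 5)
Your proof is correct, but it follows a genuinely different route from the paper's. The paper's proof is a three-line computation on second derivatives: it first dismisses the case $\frac12 \notin K$ (characteristic $2$ together with $d! \ne 0$ forces $d \le 1$), and then applies the antisymmetry $\frac{\partial}{\partial x_j}H_i = -\frac{\partial}{\partial x_i}H_j$ three times in a cycle to obtain $2\,\frac{\partial}{\partial x_i}\frac{\partial}{\partial x_j}H_k = 0$ for all $i,j,k$; since $2\,d! \ne 0$, every monomial of degree between $2$ and $d$ is detected by some second partial derivative, whence $\deg H \le 1$. You instead reduce to homogeneous components and, for each degree $k$ with $2 \le k \le d$, use Euler's identity to write $H^{(k)} = (1-k)^{-1}\nabla g$ with $g = \sum_j x_j H_j^{(k)}$, so that $\jac H^{(k)}$ is a Hessian and hence symmetric; being also antisymmetric it vanishes (as $2 \ne 0$), and Euler's identity once more kills $H^{(k)}$. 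All the steps check out: the degreewise splitting of the antisymmetry condition is legitimate, the commutation of partial derivatives (symmetry of the Hessian) holds formally over any ring, and your accounting of the invertible integers $k$, $k-1$ and $2$ is exactly what $d! \ne 0$ supplies. The paper's argument is shorter and avoids the homogeneous decomposition; yours buys the slightly stronger structural observation that each homogeneous component of degree at least $2$ of a map with antisymmetric Jacobian is a scalar multiple of a gradient, which is in the same spirit as lemma \ref{sym} and corollary \ref{rk2trdeg3}, where symmetric Jacobians are realized as Hessians. Your alternative ending via $(k+1)g = (1-k)g$ is also valid.
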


\begin{proof}
There is nothing to prove if $\frac12 \notin K$, so assume that $\frac12 \in K$.
Suppose that $\jac H$ is antisymmetric. Then
$$
\frac{\partial}{\partial x_i}\frac{\partial}{\partial x_j} H_k 
= - \frac{\partial}{\partial x_i}\frac{\partial}{\partial x_k} H_j 
= \frac{\partial}{\partial x_j} \frac{\partial}{\partial x_k} H_i 
= -\frac{\partial}{\partial x_i}\frac{\partial}{\partial x_j} H_k
$$
and hence $2 \frac{\partial}{\partial x_i}\frac{\partial}{\partial x_j} H_k = 0$, 
for all $i,j,k$. As $2d! \ne 0$ in $K$, it follows that $\deg H \le 1$. 
\end{proof}

Using lemma \ref{antisym} above, we can proceed with rank two.

\begin{theorem} \label{quadrk2}
Let $M$ be a matrix whose entries are polynomials of degree at most $1$ over $K$. 
If $\rk M = 2$, then $M$ is equivalent over $K$ to a matrix $\tilde{M}$ for 
which one of the following statements holds.
\begin{enumerate}[\upshape\bf (1)]

\item Only the first two columns of $\tilde{M}$ are nonzero.

\item Only the first two rows of $\tilde{M}$ are nonzero.

\item The first row and the first column of $\tilde{M}$ are nonzero, and 
$\tilde{M}$ is zero elsewhere.

\item The leading principal $3 \times 3$ minor matrix of $\tilde{M}$
is anti-symmetric, with only zeroes on the diagonal, 
and $\tilde{M}$ is zero elsewhere. Furthermore, the three entries below
the diagonal of this principal minor matrix are linearly independent over $K$.

\end{enumerate}
If $M$ is the Jacobian matrix of a (quadratic) polynomial map in addition, 
then the following assertions can be added to {\upshape (3)} and 
{\upshape (4)} respectively.
\begin{enumerate}[\upshape\bf (1)]
\addtocounter{enumi}{2}

\item The first column of $\tilde{M}$ is of the form $(*,*,\frac12,0,\ldots,0)$.

\item $\tilde{M}$ is symmetric, i.e.\@ $\frac12 \notin K$. 

\end{enumerate}
\end{theorem}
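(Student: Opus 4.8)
The plan is to exploit three kinds of operations that all preserve the four target forms: row operations $M \mapsto SM$ and column operations $M \mapsto MT$ over $K$ (these are exactly equivalence over $K$), together with affine substitutions $x \mapsto Ax + b$. The latter are legitimate as a proof device for the same reason as the shift $x_i \mapsto x_i + 1$ in Theorem \ref{quadrk1}: an invertible affine change of variables acts as an invertible $K$-linear map on the space of affine forms, so applying the inverse substitution to a matrix in form (1)--(4) returns a matrix in the same form, every form being described by vanishing patterns, antisymmetry, a zero diagonal, and $K$-linear (in)dependence of entries. Writing $M = C + L$ with $C := M(0)$ constant and $L$ homogeneous of degree $1$, the hypothesis $\rk M = 2$ says that every $3 \times 3$ minor of $M$ vanishes identically while some $2 \times 2$ minor does not; comparing degree-$0$ parts gives $\rk C \le 2$.

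First I would dispose of forms (1) and (2) by a rank count over $K$. Regard each column of $M$ as a vector in the $K$-vector space of $m$-tuples of affine forms, let $p$ be the dimension of the $K$-span of the columns, and define $q$ analogously for the rows. Since $K(x)$-independent vectors are $K$-independent, $p, q \ge \rk M = 2$. If $p = 2$, then the kernel of the map $K^n \to K[x]^m$ sending $e_j$ to the $j$-th column has dimension $n - 2$, and choosing $T \in \GL_n(K)$ whose last $n - 2$ columns span this kernel puts $M$ in form (1); form (2) follows symmetrically when $q = 2$. So I may assume $p, q \ge 3$, the genuinely ``irreducible'' situation, and must reach form (3) or (4).

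For the irreducible case the intended dichotomy is whether the column space of $M$ over $K(x)$ contains a nonzero constant vector (and likewise for the row space). If it does, a row operation makes that vector $e_1$, so every column lies in $\mathrm{span}_{K(x)}(e_1, c)$ for one further generator $c$; pushing this through, together with the symmetric statement for rows and the vanishing of the $3 \times 3$ minors, should force $M$ into the ``cross'' shape of form (3), after which the Jacobian addendum (3) is obtained exactly as in Theorem \ref{quadrk1} --- the quadratic dependence of each $\tilde{H}_i$ on $x_1$ yielding the entry $\frac12$ in the first column. The remaining, and I expect hardest, subcase is when neither space contains a nonzero constant vector. Here I would analyze the right and left kernels of $M$ over $K(x)$ together with the $2 \times 2$ minors, which have degree $\le 2$: the absence of constant kernel- and cokernel-vectors, combined with the vanishing of all $3 \times 3$ minors, should force the kernel to be generated by a vector of linear forms and, dually, pin the column space down to the span of a $3 \times 3$ block. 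I would then normalize, by row and column operations over $K$, to the antisymmetric form with zero diagonal whose three subdiagonal entries are $K$-linearly independent, i.e.\@ form (4).

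The main obstacle is this last step: proving completeness, namely that no configuration with $p, q \ge 3$ other than the cross and the $3 \times 3$ skew block can occur. This is precisely the point where the classification of affine spaces of matrices of rank at most $2$ is nontrivial, the skew $3 \times 3$ space being the exceptional non-compression space, and I expect it to require the most careful bookkeeping with the $3 \times 3$ minors. Once form (4) is established, the Jacobian addendum (4) is short: if $M = \jac H$ with $H$ genuinely quadratic, then the nonzero $3 \times 3$ block of $\tilde{M} = \jac \tilde{H}$ is the antisymmetric Jacobian of a quadratic map in three variables, so Lemma \ref{antisym} (with $d = 2$) would force $\deg \tilde{H} \le 1$ unless $2 = 0$ in $K$; hence $\frac12 \notin K$, and in characteristic $2$ an antisymmetric matrix with zero diagonal is symmetric, giving the claim.
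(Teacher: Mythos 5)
Your opening reductions are sound, and your handling of the two Jacobian addenda matches the paper: addendum (3) is indeed obtained by rerunning the argument of Theorem \ref{quadrk1} on the first column, and addendum (4) does follow from Lemma \ref{antisym} (if $\frac12 \in K$ the antisymmetric $3\times3$ block would have constant entries, contradicting the independence of three of them; in characteristic $2$ antisymmetric with zero diagonal means symmetric). The branch where the column space over $K(x)$ contains a nonzero constant vector can also be completed along your lines, e.g.\@ by deleting the first row and applying Theorem \ref{quadrk1} to the resulting rank-one matrix. But there is a genuine gap, and you name it yourself: the case where neither the row space nor the column space contains a nonzero constant vector is never actually argued. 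Everything there is carried by ``should force'', and this is exactly where the entire content of the theorem lies --- the $3\times3$ skew block is the one exceptional non-compression space, and asserting that ``careful bookkeeping with the $3\times3$ minors'' will produce it is not a proof. In particular, you give no argument that the surviving block is $3\times3$, that it can be made antisymmetric with zero diagonal by operations over $K$, or that its three subdiagonal entries must be linearly independent over $K$.

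For comparison, the paper's route through this case is different from your kernel/cokernel plan and contains the one idea you would need. It first normalizes the constant part: if $\rk M(0) \le 1$ and $M$ is not already a cross, it finds a variable $x_j$ whose coefficient matrix $C$ either has rank at least $2$ --- in which case the substitution $x_j \mapsto x_j^{-1}$, $x_i \mapsto x_j^{-1}x_i$ for $i \ne j$ interchanges $C$ with $M(0)$ while preserving $\rk M = 2$ --- or has rank $1$, in which case a shift $x_j \mapsto x_j - 1$ yields the rank-two antisymmetric constant part \eqref{cmx1}. Once $\rk M(0) = 2$, the linear parts of the $3 \times 3$ minors confine the support, the quadratic and cubic parts of the leading $3 \times 3$ minor force the two shapes in \eqref{Mform}, and row and column operations clean the diagonal, giving (3) or (4); the independence of the subdiagonal entries then comes for free because $a$, $b$ and $f+1$ are linearly independent for any linear form $f$. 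You would need to supply an argument of comparable substance (or invoke the known classification of affine spaces of matrices of rank at most two) before your proof is complete.
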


\begin{proof}
We first show that we may assume that 
\begin{equation} \label{cmx1}
M(0) = \left( \begin{array}{cccc}
0 & -1 & 0 & \cdots \\
1 & 0 & 0 & \cdots \\
0 & 0 & 0 & \cdots \\
\vdots & \vdots & \vdots & \sdots
\end{array} \right)
\end{equation}
Notice that this is indeed the case if $\rk M(0) \ge 2$. Otherwise, we
can follow the proof of theorem \ref{quadrk1}, to deduce that we
may assume that $\rk M(0) = 1$. So we may assume that 
$$
M(0) = 
\left( \begin{array}{cccc}
0 & 0 & 0 & \cdots \\
1 & 0 & 0 & \cdots \\
0 & 0 & 0 & \cdots \\
\vdots & \vdots & \vdots & \sdots
\end{array} \right)
$$
Suppose that an $\tilde{M}$ as in (3) cannot be obtained by interchanging the first 
and the second row of $M$. Then $M$ has a nonzero entry outside the second row and 
the first column. Without loss of generality, we may assume that the second entry 
of the first row of $M$ is nonzero. 

Say that the coefficient of $x_j$ of this entry of $M$ is nonzero. Let $C$ be the 
coefficient matrix of $x_j$ of $M$. Then
$$
x_j^{-1} \big(M - x_j C\big) + x_j^0 C
$$
is the expansion of the matrix $x_j^{-1}M$ to powers of $x_j$ (which may be negative),
because $M - x_j C$ and $C$ are the coefficient matrices of $x_j^{-1}$ and $x_j^0$
of $x_j^{-1}M$ respectively. Let $C^{*}$ be the matrix one gets by substituting 
$x_j = x_j^{-1}$ in $x_j^{-1}M$. Then
$$
C^{*} = x_j^{+1} \big(M - x_j C\big) + x_j^0 C = x_j \big(M - M(0) - x_j C\big) + x_j M(0) + C
$$
If $\rk C \ge 2$, then we can 
interchange $C$ and $M(0)$ as coefficient matrices of $M$ without affecting 
$\rk M = 2$, namely by replacing $M$ by the result of substituting $x_i = x_j^{-1}x_i$ 
for all $i \ne j$ in $C^{*}$, to obtain $\rk M(0) \ge 2$ as above. 

So assume that $\rk C = 1$. Since the second entry of the first row of $C$
is nonzero, we may assume that the first row of $C$ equals 
$(0~1~0~0~\cdots~0)$. Since $\rk C = 1$, it follows that only the second column of 
$C$ is nonzero, so we may assume that $C$ is the transpose of
$M(0)$. Now replace $M$ by the result of substituting $x_j = x_j - 1$ in $M$,
to obtain that $M(0)$ is as in \eqref{cmx1}.

So $M(0)$ is as in \eqref{cmx1}. 
Looking at the linear parts of the $3 \times 3$ minor determinants, we see that 
only the first two rows and the first two columns of $M$ may be nonzero. 
Take $i \ge 3$ arbitrary.

Suppose that (2) does not hold.
Then we may assume that the third row of $M$ is nonzero.
Suppose first that the first entry of the third row of $M$ is zero. 
Then the second entry of the third row cannot be zero. Now we can replace $M$ 
by the result of adding the second column and the second row to the first 
column and the first row respectively in $M$, to make the first entry of the 
third row of $M$ nonzero as well, without affecting $M(0)$.

So we may assume that the first entry of the third row of $M$ is nonzero.
Looking at the quadratic part of the leading principal $3 \times 3$
minor determinant, we see that the third column
is dependent on the transpose of the third row. If $i > 3$, then
we could interchange the third and the $i$\oth column of $M$, so the $i$\oth
column of $M$ is dependent on the transpose of the third row.

Suppose that (1) does not hold. 
Then we may assume that the third column of $M$ is nonzero. Just like the 
$i$\oth column of $M$ is dependent on the transpose of the third row,
we can deduce that the $i$\oth row of $M$ is dependent on the transpose
of the third column. So the $i$\oth row of $M$ is dependent on the
third row. In addition, the $i$\oth column of $M$ is dependent on the
third column.

Since the third column
is dependent on the transpose of the third row, the third entry of
the first row of $M$ is nonzero along with the first entry of the third row. 
Furthermore, we may assume that the leading principal $3 \times 3$ matrix of 
$M$ is of the form
\begin{equation} \label{Mform}
\left(\begin{array}{ccc} {*} & {*} & -a \\ {*} & c & -b \\ a & b & 0 
\end{array} \right) \qquad \mbox{or} \qquad \left(\begin{array}{ccc}
{*} & {*} & b \\ {*} & c & \lambda b \\ a & \lambda a & 0 \end{array} \right)
\end{equation}
where $a$, $b$ and $c$ are linear forms over $K$ and $\lambda \in K$.

Suppose first that the leading principal $3 \times 3$ matrix of $M$ is 
of the form of the rightmost matrix of \eqref{Mform}. 
Then we can replace $M$ by the result of subtracting the first 
column and the first row $\lambda$ times from the second column and the second row
respectively in $M$, to obtain $\lambda = 0$ in the rightmost matrix of 
\eqref{Mform}, without affecting $M(0)$. After that, we can look at the leading 
principal $3 \times 3$ minor determinant to deduce that $c$ has become zero.
So (3) is satisfied.

Suppose next that the leading principal $3 \times 3$ matrix of $M$ is of 
the form of the leftmost matrix of \eqref{Mform}, 
but not of the form of the rightmost matrix of \eqref{Mform}. Then
$a$ and $b$ are independent linear forms. 
Looking at the leading principal $3 \times 3$ minor determinant, we see that 
$b \mid a^2 c$, so $b \mid c$ and $c = \mu b$ for some $\mu \in K$. 
Now replace $M$ by the result of subtracting the third row $\mu$ times from the 
second row, to obtain $\mu = 0$, without affecting $M(0)$. 
So we may assume that $c = 0$ in the leftmost matrix of \eqref{Mform}. 
In a similar manner, we can clean the upper left corner of $M$, so
we may assume that the diagonal of the leftmost matrix of 
\eqref{Mform} is zero.

Now it is straightforward to check that the leading principal $3 \times 3$ 
matrix of $M$ is antisymmetric. Furthermore, the three entries below its diagonal
are linearly independent over $K$, because $a$, $b$ and $f+1$ are linearly
independent over $K$ for every linear form $f$.
Since $a$ and $b$ are linearly independent, it follows that the $i$\oth row of $M$
is dependent over $K$ on the third row. 
Since $-a$ and $-b$ are linearly independent, it follows that the $i$\oth column of $M$
is dependent over $K$ on the third column. 
So we can make $\tilde{M}$ as in (4) from $M$ by way of row 
and column operations.

So we have proved the first part of this theorem. To prove the second part
of this theorem, assume that $M = \jac H$ for a polynomial
map $H$. Just as in the proof of theorem \ref{quadrk1}, we may assume
that $H(0) = 0$ and $\deg H \le 2$. 
Say that $\tilde{M} = SMT$ for invertible matrices $S, T$ over $K$,
and let $\tilde{H} := SH(Tx)$.
The case where $\tilde{M}$ is as in (3) follows in a similar manner as the 
case where $\tilde{M}$ is as in (1) in the proof of theorem \ref{quadrk1}.

Hence assume that $\tilde{M}$ is as in (4) and that $\frac12 \in K$. 
Then $\jac \tilde{H}$ is as in (4) as well, and $\deg \tilde{H} \le 2$. 
From lemma \ref{antisym}, it follows that $\deg \tilde{H} = 1$. 
This contradicts that $\tilde{M}$ has
three entries which are linearly independent over $K$.
\end{proof}

\begin{corollary} \label{rk2trdeg2}
Let $H$ be a quadratic polynomial map over $K$, such that 
$r:=\rk \jac H \le 2$. If $\frac12 \in K$, then 
$K[H] \subseteq K[f_1,\ldots,f_r]$ for polynomials $f_i$. 
In particular, $\rk \jac H = \trdeg_K K(H)$.
\end{corollary}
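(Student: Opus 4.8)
The plan is to reduce, via the classification in Theorems \ref{quadrk1} and \ref{quadrk2}, to a short list of normal forms for $\jac H$ and then to read off the generators $f_i$ by inspection in each case. First I would record the elementary fact that drives everything: if $P \in K[x]$ has degree at most $2$ and $\frac12 \in K$, then $\frac{\partial}{\partial x_j}P = 0$ forces $P$ to be free of $x_j$. Indeed the only terms of $P$ involving $x_j$ are $c\,x_j$, $c\,x_j^2$ and $c\,x_j x_k$, whose $x_j$-derivatives are $c$, $2c\,x_j$ and $c\,x_k$; since $2$ is invertible, each such $c$ must vanish. I would also note at the outset that $H$ quadratic means $\deg H \le 2$, and that subtracting the constant vector $H(0) \in K^m$ changes neither $\jac H$ nor $K[H]$, so I may assume $H(0) = 0$.

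Next I would pass to a normal form. Write $M = \jac H$. If $r = 0$ then $M = 0$, so by the fact above every $H_i$ is constant, hence $H = 0$ and $K[H] = K$. If $r = 1$ I apply Theorem \ref{quadrk1}, whose two Jacobian forms are the $r = 1$ instances of types (1) and (2) below; if $r = 2$ I apply Theorem \ref{quadrk2}, and here the crucial observation is that because $M = \jac H$ and $\frac12 \in K$, the symmetric alternative (4) is excluded, leaving types (1), (2) and (3). In either case this produces $S \in \GL_m(K)$ and $T \in \GL_n(K)$ with $\tilde M := SMT$ in one of these forms. Setting $\tilde H := SH(Tx)$ and letting $\sigma$ be the automorphism $x \mapsto Tx$ of $K[x]$, equation \eqref{chain} shows that $\jac\tilde H$ has the same zero pattern, and the same constant entry $\frac12$, as $\tilde M$, while $K[\tilde H] = \sigma(K[H])$. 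Since $\sigma$ is a $K$-algebra automorphism, it suffices to exhibit $r$ generators of $K[\tilde H]$ and then apply $\sigma^{-1}$; thus I may assume $\jac H$ itself is of type (1), (2) or (3).

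The three cases are then settled by inspection. In type (1) only the first $r$ columns of $\jac H$ are nonzero, so every $H_i$ has vanishing $x_j$-derivative for $j > r$ and hence lies in $K[x_1,\ldots,x_r]$; taking $f_i = x_i$ gives $K[H] \subseteq K[x_1,\ldots,x_r]$. In type (2) only the first $r$ rows are nonzero, so each $H_i$ with $i > r$ has all partials zero and, being free of every variable with $H_i(0) = 0$, vanishes; thus $K[H] = K[H_1,\ldots,H_r]$ and $f_i = H_i$ works. Type (3) is the substantive case: only the first row and first column are nonzero, so each $H_i$ with $i \ge 2$ depends on $x_1$ alone, i.e.\@ $H_i \in K[x_1]$, while the entry $\frac12$ in the first column, say $\frac{\partial}{\partial x_1}H_3 = \frac12$, forces $H_3 = \frac12 x_1$ (the $x_1^2$-coefficient dies because $2 \ne 0$). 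Hence $x_1 = 2H_3 \in K[H]$, so $K[x_1] \subseteq K[H]$, every $H_i$ with $i \ge 2$ already lies in $K[H]$, and $K[H] = K[x_1, H_1]$; I take $f_1 = x_1$ and $f_2 = H_1$.

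Finally, from $K[H] \subseteq K[f_1,\ldots,f_r]$ I would pass to fraction fields to obtain $\trdeg_K K(H) \le \trdeg_K K(f_1,\ldots,f_r) \le r = \rk\jac H$, and combine this with the general inequality $\rk\jac H \le \trdeg_K K(H)$ recorded in the introduction to conclude equality. The main obstacle is type (3): unlike the column and row types, its first row $H_1$ may involve all of $x_1,\ldots,x_n$, so one cannot bound the generators by a coordinate subring directly. The decisive point is that the normalized constant $\frac12$ in the first column recovers $x_1$ itself as an element of $K[H]$, which collapses the apparently $n$-variable generator $H_1$ into a single extra generator and keeps us within $r = 2$ generators overall.
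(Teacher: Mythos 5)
Your proof is correct and follows essentially the same route as the paper: reduce to the normal forms of Theorems \ref{quadrk1} and \ref{quadrk2}, note that case (4) is excluded because it forces $\frac12 \notin K$, and read off the $r$ generators in each remaining case (with the $\frac12$ entry recovering $x_1 \in K[H]$ in the mixed row/column case). Your write-up merely makes explicit some steps the paper leaves to the reader.
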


\begin{proof}
From $\frac12 \in K$, it follows that for every term $t$ of $H$,
$$
\frac{\partial}{\partial x_i} t \ne 0 \Longleftrightarrow x_i \mid t
$$
If $r = 0$, then $H$ is constant. If $r = 1$, then it follows from
theorem \ref{quadrk1} that we may assume that either $K[H] = K[x_1]$
or $K[H] = K[H_1]$. So assume that $r = 2$. Then it follows from
theorem \ref{quadrk2} that we may assume that either $K[H] \subseteq K[x_1,x_2]$ 
or $K[H] = K[H_1,H_2]$ or $K[H] = K[H_1,x_1]$, because (4) of 
theorem \ref{quadrk2} for $M = \jac H$ requires $\frac12 \notin K$.
\end{proof}

\begin{lemma} \label{sym}
Let $H$ be a polynomial map over $K$ and suppose that $\jac H$ is symmetric. 
If for each $i$, the $i$\oth entry of the 
diagonal of $\jac H$ has no terms whose degrees with respect to $x_i$ are equal
to $-2$ in $K$, then there exists a polynomial $h \in K[x]$ such that 
$H = (\jac h)\tp$ and $\jac H = \hess h$.
\end{lemma}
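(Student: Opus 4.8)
The plan is to prove this as the polynomial version of the Poincaré lemma (a symmetric Jacobian is the Hessian of a potential), the only real subtlety being positive characteristic, where term-by-term integration can fail. I would build the potential $h$ coefficient by coefficient and read off from the symmetry of $\jac H$ exactly the compatibility relations needed; the diagonal hypothesis enters at the single place where integration would break down. Write multi-indices $\beta,\gamma\in\mathbb{Z}_{\ge0}^n$, let $e_i$ be the $i$-th standard basis vector, and let $a_{i,\beta}$ be the coefficient of $x^\beta:=x_1^{\beta_1}\cdots x_n^{\beta_n}$ in $H_i$. The symmetry $\frac{\partial}{\partial x_j}H_i=\frac{\partial}{\partial x_i}H_j$ of $\jac H$, read off on the coefficient of $x^\beta$ and then rewritten via $\gamma=\beta+e_i+e_j$, becomes
\[
\gamma_j\,a_{i,\gamma-e_i}=\gamma_i\,a_{j,\gamma-e_j}\qquad(i\neq j).
\]
I seek $h=\sum_\gamma c_\gamma x^\gamma$ with $\frac{\partial}{\partial x_i}h=H_i$ for all $i$; comparing coefficients, this is equivalent to $\gamma_i c_\gamma=a_{i,\gamma-e_i}$ for every $\gamma$ and every $i$ with $\gamma_i\ge1$.

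Next I would define the coefficients. For each $\gamma$ with $|\gamma|\ge1$, if some index $i$ has $\gamma_i\ge1$ and $\gamma_i\neq0$ in $K$, set $c_\gamma:=\gamma_i^{-1}a_{i,\gamma-e_i}$; otherwise set $c_\gamma:=0$, and $c_0:=0$. The displayed compatibility relation makes this well posed: if $\gamma_i,\gamma_j\neq0$ in $K$ then $\gamma_i^{-1}a_{i,\gamma-e_i}=\gamma_j^{-1}a_{j,\gamma-e_j}$, so the value is independent of the chosen $i$; and if $\gamma_j=0$ in $K$ while some $\gamma_i\neq0$ in $K$, the same relation forces $a_{j,\gamma-e_j}=0$, which is exactly what the equation $\gamma_j c_\gamma=a_{j,\gamma-e_j}$ then demands. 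Hence all the required equations hold for every $\gamma$ having at least one $\gamma_i$ nonzero in $K$.

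The only $\gamma$ left are those for which every $i$ with $\gamma_i\ge1$ has $\gamma_i=0$ in $K$; here the equations read $0=a_{i,\gamma-e_i}$, so I must verify $a_{i,\gamma-e_i}=0$ for each such $i$, and this is precisely where the hypothesis is used. For such a $\gamma$ the exponent $\gamma-e_i$ has $x_i$-degree $\gamma_i-1$, which equals $-1$ in $K$ since $\gamma_i=0$ in $K$. Now $\frac{\partial}{\partial x_i}$ sends a term of $H_i$ of $x_i$-degree equal to $-1$ in $K$ (so with $x_i$-exponent nonzero in $K$) to a term of $\frac{\partial}{\partial x_i}H_i$ of $x_i$-degree equal to $-2$ in $K$; this term is nonzero and cannot be cancelled, because differentiation with respect to $x_i$ maps distinct monomials to distinct monomials and annihilates only those of $x_i$-degree $0$ in $K$. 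Since the diagonal entry $\frac{\partial}{\partial x_i}H_i$ has, by hypothesis, no term of $x_i$-degree equal to $-2$ in $K$, it follows that $H_i$ has no term of $x_i$-degree equal to $-1$ in $K$, i.e. $a_{i,\gamma-e_i}=0$. Thus the construction is consistent for every $\gamma$, and $h:=\sum_\gamma c_\gamma x^\gamma$ is a genuine polynomial (finite, as only finitely many $a_{i,\beta}$ are nonzero) with $\frac{\partial}{\partial x_i}h=H_i$ for all $i$, whence $H=(\jac h)\tp$ and $\jac H=\hess h$.

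I expect the main obstacle to be the bookkeeping that converts the stated hypothesis on the diagonal entries $\frac{\partial}{\partial x_i}H_i$ into the cleaner statement that $H_i$ itself carries no term of $x_i$-degree equal to $-1$ in $K$; the crux is the no-cancellation observation above, which guarantees that a forbidden term of $H_i$ cannot be hidden after differentiation. By contrast, extracting the compatibility relation from the symmetry of $\jac H$ is routine once the substitution $\gamma=\beta+e_i+e_j$ is made, and the verification that the pieced-together coefficients solve all equations is then purely formal.
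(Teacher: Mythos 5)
Your proof is correct and follows essentially the same route as the paper: the paper likewise converts the diagonal hypothesis into the statement that $H_i$ has no terms of $x_i$-degree equal to $-1$ in $K$, and then performs the coefficientwise integration by citing the proof of \cite[Lem.~1.3.53]{MR1790619} (where the nonvanishing of the divisors $\alpha_i$ plays the role of your invertible $\gamma_i$). You have simply written out in full the integration and compatibility bookkeeping that the paper delegates to that reference, and your handling of the exceptional exponents with $\gamma_i=0$ in $K$ is exactly where the hypothesis is meant to enter.
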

 
\begin{proof}
Assume that the diagonal of $\jac H$ is as indicated above. Then for each $i$,
$H_i$ has no terms whose degrees with respect to $x_i$ are equal
to $-1$ in $K$. From the proof of \cite[Lem.~1.3.53]{MR1790619}, it follows
that there exists a polynomial $h \in K[x]$ such that $H = (\jac h)\tp$
(the $\alpha_i$ in that proof are nonzero). So $\jac H = \hess h$. 
\end{proof}
 
\begin{corollary} \label{rk2trdeg3}
Suppose that $H$ is a polynomial map of degree at most $2$ in dimension $3$
over $K$, such that $\jac H$ is antisymmetric with only zeroes on 
the diagonal. Suppose that $\jac H$ is not constant. Then there 
exist a $\lambda \in K^{*}$ and $c_1,c_2,c_3 \in K$, such that
$$
\jac H = \hess \big(\lambda(x_1+c_1)(x_2+c_2)(x_3+c_3)\big)
$$
Furthermore, $\frac12 \notin K$ and $\rk \jac H = 2 < 3 = \trdeg_K K(H)$.
\end{corollary}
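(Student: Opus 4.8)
The plan is to run everything through the two lemmas just proved. First I would settle the characteristic. Applying Lemma~\ref{antisym} with $d = 2$: if $\frac12 \in K$, then $2! \ne 0$ in $K$, so an antisymmetric Jacobian would force $\deg H \le 1$ and hence $\jac H$ constant, contradicting the hypothesis. Thus $\frac12 \notin K$, i.e.\ $K$ has characteristic $2$, which already gives one of the stated conclusions. In this characteristic $-1 = 1$, so the antisymmetric matrix $\jac H$ is in fact symmetric, and its diagonal is zero by assumption. The hypothesis of Lemma~\ref{sym} is then vacuous (a zero diagonal entry has no terms at all), so there is a polynomial $h \in K[x]$ with $H = (\jac h)\tp$ and $\jac H = \hess h$.

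Next I would pin down the shape of the three off-diagonal entries of $\hess h$, which are polynomials of degree at most $1$. Write $f_1 := \partial_2\partial_3 h$ (position $(2,3)$), $f_2 := \partial_1\partial_3 h$ (position $(1,3)$) and $f_3 := \partial_1\partial_2 h$ (position $(1,2)$). Differentiating the vanishing diagonal entries $\partial_i^2 h = 0$ shows each $f_k$ depends on a single variable: for instance $\partial_1 f_3 = \partial_2(\partial_1^2 h) = 0$ and $\partial_2 f_3 = \partial_1(\partial_2^2 h) = 0$, so $f_3 = \lambda_3 x_3 + b_3$, and likewise $f_1 = \lambda_1 x_1 + b_1$, $f_2 = \lambda_2 x_2 + b_2$. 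Because all three arise from the single potential $h$, the mixed partial $\partial_1\partial_2\partial_3 h$ may be computed three ways, giving $\lambda_1 = \partial_1 f_1 = \partial_2 f_2 = \lambda_2$ and $=\partial_3 f_3 = \lambda_3$; call the common value $\lambda$. Since $\jac H$ is not constant some $f_k$ is non-constant, forcing $\lambda \ne 0$. Setting $c_i := \lambda^{-1} b_i$ and comparing with a direct computation of $\hess\big(\lambda (x_1+c_1)(x_2+c_2)(x_3+c_3)\big)$, whose $(i,j)$ off-diagonal entry is $\lambda(x_k+c_k)$ with $\{i,j,k\} = \{1,2,3\}$, yields the claimed identity.

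Finally I would read off the rank and the transcendence degree. In characteristic $2$ the determinant of a symmetric $3\times 3$ matrix with zero diagonal equals twice the product of its three off-diagonal entries, hence vanishes, so $\rk \jac H \le 2$; on the other hand the leading principal $2\times 2$ minor equals $-f_3^2 = -\lambda^2(x_3+c_3)^2 \ne 0$, so $\rk \jac H = 2$. For the transcendence degree, $H = (\jac h)\tp$ gives $H_i = \partial_i h$, so with $u_i := x_i + c_i$ we get $H_1 = \lambda u_2 u_3$, $H_2 = \lambda u_1 u_3$, $H_3 = \lambda u_1 u_2$; then $H_1 H_2 H_3^{-1} = \lambda u_3^2$ together with its cyclic variants shows $u_1^2, u_2^2, u_3^2 \in K(H)$. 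As these squares are algebraically independent over $K$ and $K(H) \subseteq K(x)$, we conclude $\trdeg_K K(H) = 3$. The genuinely non-formal point is this last one: the strict inequality $\trdeg_K K(H) = 3 > 2 = \rk\jac H$ is precisely the inseparability phenomenon of characteristic $2$, made visible by the squares $u_i^2$ lying in $K(H)$ while no honest Jacobian witness for the third transcendental exists. The pinning-down of the common coefficient $\lambda$ in the middle paragraph is the only step where the single-potential (Hessian) structure is essential rather than cosmetic.
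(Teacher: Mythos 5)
Your first two paragraphs are sound and essentially follow the paper's route (Lemma~\ref{antisym} to get $\frac12\notin K$ and symmetry, Lemma~\ref{sym} to get a Hessian potential); your way of pinning down the off-diagonal entries directly from $\partial_i^2h=0$ and the equality of the mixed third partials is a clean variant of the paper's normalization of $h$, and your rank computation is fine. The problem is in the last paragraph.

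The gap is the step ``$H = (\jac h)\tp$ gives $H_i = \partial_i h$, so $H_1 = \lambda u_2u_3$, \dots''. You have only established $\hess h = \hess\big(\lambda u_1u_2u_3\big)$, not $h = \lambda u_1u_2u_3$, and in characteristic $2$ a polynomial can have zero Hessian but nonzero gradient. Concretely, $H_i$ is only determined by the hypotheses up to addition of an element of $K[x_1^2,x_2^2,x_3^2]$ of degree at most $2$: the map $H = (x_2x_3 + x_1^2,\; x_3x_1,\; x_1x_2)$ over $\mathbb{F}_2$ satisfies every hypothesis of the corollary (its Jacobian is the same antisymmetric matrix with zero diagonal, since $\partial_1 x_1^2 = 0$), yet $H_1 \ne \lambda u_2u_3$ and $H_1H_2H_3^{-1} \ne \lambda u_3^2$. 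So your explicit identities, and hence the conclusion $u_1^2,u_2^2,u_3^2 \in K(H)$, are not justified for the given $H$; they hold only for the particular representative $\nabla\big(\lambda u_1u_2u_3\big)$. The statement $\trdeg_K K(H)=3$ is still true for such $H$, but it needs an argument that depends only on data actually determined by the hypotheses. The paper does this by supposing $f(H)=0$, passing to leading homogeneous parts to get $\bar f(\bar H)=0$, and invoking a classification of homogeneous maps in dimension $3$ with $\trdeg_K K(\bar H)\le 2$ (namely $S\bar H=(p,q,0)$ or $(p^2,pq,q^2)$), which is then contradicted by the known shape of $\jac\bar H$ alone --- and $\jac\bar H$ is insensitive to the $x_i^2$ ambiguity. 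To repair your proof you would either have to redo the $H_1H_2H_3^{-1}$ computation for the general form $H_i=\lambda u_ju_k+\alpha_ix_1^2+\beta_ix_2^2+\gamma_ix_3^2+\delta_i$, or switch to an argument of the paper's type.
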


\begin{proof}
From lemma \ref{antisym}, it follows that $\frac12 \notin K$ and that 
$\jac H$ is symmetric. From lemma \ref{sym}, it follows that
$\jac H = \hess h$ for some polynomial $h$.

Since $\deg H \le 2$, it follows that terms of degree greater than $3$ of $h$
cannot affect $\hess h$. Since $\frac12 \notin K$, it follows that terms of 
degree at most $3$ of $h$ which are divisible by $x_i^2$ for some $i$ cannot affect 
$\hess h$. So we can remove terms of $h$ of degree greater than $3$
and terms of $h$ which are divisible by $x_i^2$ for some $i$. Furthermore, we
can remove terms of $h$ of degree at most $1$. After these removals, $h$ will 
be of the form
$$
h = \lambda x_1x_2x_3 + \tilde{c}_1 x_2 x_3 + \tilde{c}_2 x_3 x_1 + \tilde{c}_3 x_1 x_2
$$
In particular $\deg h \le 3$. Suppose that $\jac H$ is not constant. Then $\deg h = 3$,
so $\lambda \ne 0$. Hence $\jac H$ is of the given form, with $c_i = \lambda^{-1} \tilde{c}_i$
for each $i$. Furthermore, $\rk \jac H = 2$.

Suppose that $\trdeg_K K(H) \le 2$. Then there exists a polynomial $f$ such
that $f(H) = 0$. If $\bar{f}$ is the leading homogeneous part of $f$ and
$\bar{H}$ is the leading homogeneous part of $H$, then $\bar{f}(\bar{H}) = 0$,
so $\trdeg_K K(\bar{H}) \le 2$. From \cite[Th.~2.7]{1501.06046}, it follows that 
there exists an $S \in \GL_3(K)$ such that 
$$
S\bar{H} = (p,q,0) \qquad \mbox{or} \qquad S\bar{H} = (p^2,pq,q^2)
$$
for homogeneous polynomials $p,q$ of the same degree.
In the first case, the rows of $\jac \bar{H}$ are dependent over $K$. 
In the second case, $\deg (p,q) = 1$ and the columns of $\jac \bar{H}$ 
are dependent over $K$. This is however not the case for
$$
\jac \bar{H} = \lambda \left( \begin{array}{ccc} 0 & x_3 & x_2 \\
               x_3 & 0 & x_1 \\ x_2 & x_1 & 0 \end{array} \right)
$$
so $\trdeg_K K(H) = 3$.
\end{proof}

\begin{corollary}
Let $M$ be a matrix whose entries are polynomials of degree 
at most $1$ over $K$. Suppose that $\rk M \le 2$ and that $M$ is the Jacobian matrix
of a polynomial map $H$. Then it is impossible to choose $H$ such that
$ \rk \jac H = \trdeg_K K(H)$ (and $\jac H = M$), if and only if
$M$ is as in {\upshape(4)} of theorem {\upshape\ref{quadrk2}}. 
In that case, $\frac12 \notin K$ and $\rk \jac H = 2 < 3 = \trdeg_K K(H)$ 
for every polynomial map $H$ such that $\jac H = M$.
\end{corollary}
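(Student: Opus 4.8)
The plan is to combine the classification in Theorems \ref{quadrk1} and \ref{quadrk2} with Corollaries \ref{rk2trdeg2} and \ref{rk2trdeg3}, after first reducing to a normal form. The key preliminary observation is that the equality $\rk \jac H = \trdeg_K K(H)$ is preserved when $M = \jac H$ is replaced by an equivalent matrix $\tilde{M} = SMT$ and $H$ by $\tilde{H} = SH(Tx)$: by \eqref{chain} one has $\rk \jac \tilde{H} = \rk \tilde{M} = \rk M$, while $S \in \GL_m(K)$ gives $K(SH) = K(H)$ and $x \mapsto Tx$ is a $K$-automorphism of $K(x)$, so $\trdeg_K K(\tilde{H}) = \trdeg_K K(H)$. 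Hence I may assume that $M$ is already in one of the normal forms of Theorems \ref{quadrk1} and \ref{quadrk2}, and read both ``some $H$ achieves equality'' and ``every $H$ fails equality'' for this normal form. Note also that $\rk \jac H = \rk M$ is fixed, so the only quantity that depends on the choice of antiderivative is $\trdeg_K K(H)$, and since $\trdeg_K K(H) \ge \rk \jac H$ always, ``impossible to choose $H$ with equality'' means precisely $\trdeg_K K(H) > \rk M$ for \emph{every} antiderivative.

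For the ``only if'' direction I would show that whenever $M$ is not as in (4), some antiderivative satisfies $\rk \jac H = \trdeg_K K(H)$. If $\frac12 \in K$ this is immediate and uniform: form (4) cannot occur, since its Jacobian addendum forces $\frac12 \notin K$, and by Corollary \ref{rk2trdeg2} the quadratic antiderivative already has $\rk \jac H = \trdeg_K K(H)$. If $\frac12 \notin K$, then the proofs of Theorems \ref{quadrk1} and \ref{quadrk2} show that a Jacobian matrix is equivalent to form (1), (2) or (4); form (3) collapses to form (2), exactly as the subcase $\frac12 \notin K$ in the proof of Theorem \ref{quadrk1} turns form (1) into form (2). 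In the ``two columns'' form (1) one may choose the representative with $H \in K[x_1,x_2]^m$, so $\trdeg_K K(H) \le 2$; in the ``two rows'' form (2) one may take the constant components $H_3 = H_4 = \cdots = 0$, so only $H_1, H_2$ are nonconstant and again $\trdeg_K K(H) \le 2$. In both cases $\rk \jac H = 2 \le \trdeg_K K(H) \le 2$ gives equality (the rank $\le 1$ forms are the same with $2$ replaced by $\rk M$, and $\rk M = 0$ is trivial).

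For the ``if'' direction, suppose $M$ is as in (4). Its Jacobian addendum gives $\frac12 \notin K$, so the antisymmetric leading $3 \times 3$ block is in fact symmetric with zero diagonal, and $\rk \jac H = \rk M = 2$ for every antiderivative. The essential point is the transcendence degree, and here I would argue as in Corollary \ref{rk2trdeg3}: after the reduction, the three relevant components of $H$ have algebraically independent leading homogeneous parts, so passing any hypothetical algebraic relation to its leading homogeneous part yields a contradiction. This gives $\trdeg_K K(H) \ge 3 > 2 = \rk \jac H$ for every $H$ with $\jac H = M$, so equality is indeed impossible. In dimension three the three components exhaust the variables, so $\trdeg_K K(H) = 3$ there, which is the stated value.

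The step I expect to be the main obstacle is exactly this last one. Because $\frac12 \notin K$ (so $K$ has characteristic $2$), the antiderivative $H$ is not unique: one may add to any component an arbitrary element of $K[x_1^2, \ldots, x_n^2]$ without changing $\jac H = M$, so the bound $\trdeg_K K(H) \ge 3$ must hold for \emph{all} of these antiderivatives at once, not merely for the quadratic one treated in Corollary \ref{rk2trdeg3}. The leading-homogeneous-part reduction is what makes this uniform, since adding squares contributes only even-degree terms and cannot lower the transcendence degree of the dominant forms. I would also remark that when $n > 3$ this same freedom can make $\trdeg_K K(H)$ exceed $3$, so the clean value $3$ is the minimum, attained by the quadratic representative and forced in dimension three; for the equivalence in the statement only the strict inequality $\trdeg_K K(H) > 2$ is needed.
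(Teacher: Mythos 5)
Your strategy is the paper's own: the ``only if'' direction via theorem \ref{quadrk2} and (the proof of) corollary \ref{rk2trdeg2}, the ``if'' direction via the leading-homogeneous-part argument of corollary \ref{rk2trdeg3}. The reduction to normal form and the entire ``only if'' direction are correct, including the observation that for $\frac12 \notin K$ form (3) collapses into form (2). Your side remark is also correct: when $n>3$ one may add $x_4^2$ to a component whose Jacobian row is zero without changing $\jac H$, so the stated value $\trdeg_K K(H)=3$ cannot hold for \emph{every} antiderivative, and only the inequality $\trdeg_K K(H)\ge 3$ is available (and is all the equivalence needs).

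The genuine gap is in the ``if'' direction, at precisely the step you flag as the main obstacle. Since $\frac12\notin K$, an arbitrary $H$ with $\jac H=M$ differs from the quadratic representative by adding to each component an arbitrary element of $K[x_1^2,\ldots,x_n^2]$, of arbitrary degree. Your claim that ``the three relevant components of $H$ have algebraically independent leading homogeneous parts'' then fails: take $g\in K[x_1^2,\ldots,x_n^2]$ with $\deg g>2$ and add the same $g$ to both $H_1$ and $H_2$; the leading forms of $H_1+g$ and $H_2+g$ are both $\bar g$, hence algebraically dependent, and the implication ``leading forms independent $\Rightarrow$ components independent'' yields nothing. The sentence ``adding squares contributes only even-degree terms and cannot lower the transcendence degree of the dominant forms'' is not an argument: squares of degree greater than $2$ \emph{replace} the dominant forms, and the new ones need not be independent. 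A correct treatment has to control the added terms directly, for instance by working over the subfield $K(x_1^2,\ldots,x_n^2)$ and decomposing $K[x]$ as a free module over $K[x_1^2,\ldots,x_n^2]$ on the squarefree monomials, rather than via leading forms. In fairness, the paper's one-line proof only cites corollary \ref{rk2trdeg3}, which covers degree at most $2$ in dimension $3$, so it is silent on the same point; but your proposal explicitly undertakes to close this gap and does not.
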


\begin{proof}
The `if'-part follows from corollary \ref{rk2trdeg3}. The last claim 
follows from corollary \ref{rk2trdeg3} as well. The `only if'-part
follows from theorem \ref{quadrk2} and the proof of corollary 
\ref{rk2trdeg2}.
\end{proof}

\section{Nilpotent Jacobian matrices of degree one and rank at most two} \label{srk2n}

Before we prove the main result of this section, which is theorem \ref{srk2nmain} below,
we formulate a lemma about nilpotent matrices $N$ of degree $1$ and size $2 \times 2$ 
or $3 \times 3$, for which $N(0)$ has a simple structure.

\begin{lemma} \label{quadnil}
Let $K$ be a field and $N$ be a nilpotent matrix whose entries are polynomials 
of degree $1$. Then the following holds.
\begin{enumerate}[\upshape\bf (i)]

\item If $$N(0) = \left( \begin{array}{cc} 0 & 0 \\ 0 & 0 \end{array} \right)$$
then $N$ is similar over $K$ to a triangular matrix with only zeroes on the diagonal.

\item If $$N(0) = \left( \begin{array}{cc} 0 & 0 \\ 1 & 0 \end{array} \right)$$
then $N$ is lower triangular with only zeroes on the diagonal.

\item If $$N(0) = \left( \begin{array}{ccc} 0 & 0 & 1 \\ 0 & 0 & 0 \\ 0 & 0 & 0 
\end{array} \right)$$
then $N$ is similar over $K$ to a triangular matrix with only zeroes on the diagonal.

\item If $$N(0) = \left( \begin{array}{ccc} 0 & 1 & 0 \\ 0 & 0 & 1 \\ 0 & 0 & 0 
\end{array} \right)$$
and $N$ is \emph{not} upper triangular, then $N$ is similar over $K$ to a matrix of 
the form 
\begin{equation} \label{Nconj}
\left( \begin{array}{ccc} 0 & f+1 & 0 \\ b & 0 & f+1 \\ 0 & -b & 0 \end{array} \right)
\end{equation}
where $f$ and $b$ are linear forms and $b \ne 0$.

\end{enumerate}
\end{lemma}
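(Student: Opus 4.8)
The common engine is the following reformulation of nilpotency. A matrix $N \in \Mat_m(K[x])$ is nilpotent if and only if it is nilpotent over the fraction field $K(x)$ (since $N^r$ is the same matrix over both rings), and by Lemma~\ref{eigen} applied over $K(x)$ this holds exactly when, for each $r$ with $1 \le r \le m$, the sum of the $r \times r$ principal minor determinants of $N$ vanishes. Because every entry of $N$ has degree at most $1$, each such sum is a polynomial whose homogeneous parts of every degree must vanish separately. My plan is to extract these graded identities, read off the shape of $N$ directly, and, in the one case where a constant diagonal survives, remove it by a single explicit conjugation over $K$. I treat the four parts in order of increasing difficulty.

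For (i) the entries are linear forms, so nilpotency reduces to $\operatorname{tr} N = 0$ and $\det N = 0$; writing $N = \left(\begin{smallmatrix}a&b\\c&d\end{smallmatrix}\right)$ these give $d=-a$ and $bc=-a^2$. If $a=0$ then $b=0$ or $c=0$ since $K[x]$ is a domain, and $N$ is already strictly triangular; if $a\ne 0$ then, as $a$ is a prime of the UFD $K[x]$ dividing $bc$ and $b,c$ are linear forms, one of them is a scalar multiple of $a$, and the relation forces $N = aN_0$ for a constant nilpotent $N_0\in\Mat_2(K)$. Conjugating $N_0$ to strictly triangular form over $K$ and rescaling by $a$ settles (i). For (ii), writing the lower-left entry as $1+c$, one finds $\det N = -a^2 - b - bc$; separating the degree-$1$ part $-b$ from the degree-$2$ part forces $b=0$ and then $a=d=0$, so $N$ is already lower triangular with zero diagonal. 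For (iii), the linear part of the sum of $2\times2$ principal minors is the $(3,1)$-entry, which must vanish, and the degree-$2$ part of $\det N$ then equals $N_{21}N_{32}$, whence $N_{21}=0$ or $N_{32}=0$. In either case $N$ becomes block triangular with a $1\times1$ diagonal block and a $2\times2$ block whose constant part is zero; the $1\times1$ block is nilpotent hence zero, the $2\times2$ block falls under (i), and Corollary~\ref{blocklt} finishes the case.

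Case (iv) is the heart of the matter. I write $N = J + L$, where $J = N(0)$ is the Jordan block and $L=(a_{ij})$ has linear forms as entries. The linear parts of the principal-minor conditions give $a_{31}=0$ (from $\det N$) and $a_{32}=-a_{21}$ (from the $2\times2$ sum), while the degree-$2$ part of $\det N$ becomes $a_{21}(a_{11}-a_{33})$. The hypothesis that $N$ is not upper triangular means precisely that $a_{21}\ne 0$, so $a_{33}=a_{11}$, and then $\operatorname{tr} N=0$ gives $a_{22}=-2a_{11}$. The surviving degree-$2$ part of the $2\times2$ sum and the degree-$3$ part of $\det N$ reduce to
\begin{equation*}
a_{21}(a_{23}-a_{12}) = 3a_{11}^2 \qquad\text{and}\qquad a_{11}^3 = a_{21}^2\,a_{13}.
\end{equation*}
The second identity is the key constraint: since $a_{21}$ is a prime of $K[x]$ dividing $a_{11}^3$, it divides $a_{11}$, and comparing degrees yields $a_{11}=\alpha\,a_{21}$ for a unique $\alpha\in K$, whence $a_{13}=\alpha^3 a_{21}$ and, dividing the first identity by $a_{21}$, also $a_{23}-a_{12}=3\alpha^2 a_{21}$.

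It remains to exhibit the conjugation. I conjugate by $P := I + \alpha J$, which commutes with $J$ and hence preserves the constant part $N(0)=J$; a direct computation of $P^{-1}LP$ then sends $N$ to exactly the form~\eqref{Nconj} with $b := a_{21}$ and $f := a_{12} + \alpha^2 a_{21}$. Indeed the diagonal $(\alpha a_{21},-2\alpha a_{21},\alpha a_{21})$ is removed, the $(1,3)$-entry $\alpha^3 a_{21}$ is cleared, and the two superdiagonal entries are equalised, the identities of the previous step being precisely what makes the off-diagonal entries match. I expect the main obstacle to be twofold: first, recognising that the single one-parameter conjugation $I+\alpha J$ does the whole job — the relations $a_{13}=\alpha^3 a_{21}$ and $a_{23}-a_{12}=3\alpha^2 a_{21}$ are exactly the fingerprints of such a conjugation, which is how one is led to it; and second, checking that every step survives in all characteristics. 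In characteristics $2$ and $3$ several of the coefficients $2\alpha$ and $3\alpha^2$ collapse, but the derived relations and the final matching degenerate consistently (for instance in characteristic $3$ the first displayed identity becomes $a_{23}=a_{12}$, matching $3\alpha^2 a_{21}=0$), so no separate treatment is required and the conclusion holds regardless of the characteristic of $K$.
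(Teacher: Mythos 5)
Your proof is correct and follows essentially the same route as the paper: nilpotency is converted via Lemma \ref{eigen} into the vanishing of the sums of principal minor determinants, these are split into homogeneous parts to read off the shape of $N$, and in case (iv) the diagonal is removed by conjugation with $I+\alpha J$, which is precisely the paper's matrix $T$ (there $-a/b=-c/b=\alpha$). The only substantive difference is a small improvement in (iv): by substituting $a_{21}(a_{23}-a_{12})=3a_{11}^2$ into the cubic part of $\det N$ you obtain $a_{11}^3=a_{21}^2a_{13}$ and hence $a_{21}\mid a_{11}$ uniformly in every characteristic, whereas the paper distinguishes $\frac12\in K$ (using $b\mid 2a^3$ from the cubic part of the determinant) from $\frac12\notin K$ (using $b\mid 3a^2=a^2$ from the quadratic part of the $2\times 2$ minor sum); your remaining identities $a_{13}=\alpha^3a_{21}$ and $a_{23}-a_{12}=3\alpha^2a_{21}$ and the asserted matching of the entries of $P^{-1}NP$ with \eqref{Nconj} all check out.
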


\begin{proof} Since $N$ is nilpotent, it follows from lemma \ref{eigen}
that for every $r \le n$,  the sum of the principal minor determinants of
size $r \times r$ is zero. This gives us the trace condition if
$r = 1$, the $2 \times 2$ principal minors condition if $r = 2$, and the determinant
condition if $r = n$.
\begin{enumerate}[\upshape\bf (i)]

\item Using the trace condition, we see that 
$$
N = \left( \begin{array}{cc} a & b \\ c & -a \end{array} \right)
$$
for linear forms $a,b$. Using the determinant condition, we see that $bc = a^2$, so
$b \mid a^2$. As polynomial rings have unique factorization, $b = \lambda a$ for some 
$\lambda \in K$. Since $c = \lambda^{-1} a$, the entries of $a^{-1} N$
are contained in $K$. Hence $a^{-1} N$ is strongly nilpotent over an infinite extension field
of $K$. From proposition \ref{stronglt}, it follows that $a^{-1} N$ is similar over $K$ to 
a triangular matrix with only zeroes on the diagonal, and so is $N$.

\item Using the determinant condition,
$$
N = \left( \begin{array}{cc} {*} & 0 \\ {*} & {*} \end{array} \right)
$$
(because the linear part is zero). Hence $N$ is lower triangular. From 
lemma \ref{strongblock}, it follows that $N$ has only zeroes on the diagonal.

\item Using the principal $2 \times 2$ minors condition, we see that
$$
N = \left( \begin{array}{ccc} {*} & {*} & {*} \\ b & {*} & {*} \\ 
    0 & c & {*} \end{array} \right)
$$
for linear forms $b,c$ (because the linear part is zero). On account of the determinant 
condition, $bc = 0$ (because $bc$ is the quadratic part of $\det N$). 

If $b = 0$, then the leading principal $1 \times 1$ minor matrix and the trailing principal 
$2 \times 2$ minor matrix of $N$ are nilpotent on account of lemma \ref{strongblock}.
If $c = 0$, then the leading principal $2 \times 2$ minor matrix and the trailing principal 
$1 \times 1$ minor matrix of $N$ are nilpotent on account of lemma \ref{strongblock}.

In both cases, the principal $1 \times 1$ minor matrix is zero, because $\Mat_1(K)$ is a
reduced ring. From (i), we deduce that in both cases, the principal 
$2 \times 2$ minor matrix is similar over $K$ to a triangular matrix with only
zeroes on the diagonal. On account of corollary \ref{blocklt}, 
$N$ is similar over $K$ to a triangular matrix with only zeroes on the diagonal. 

\item Using all principal minors conditions
$$
N = \left( \begin{array}{ccc} -a & {*} & {*} \\ b & a+c & {*} \\ 
    0 & -b & -c \end{array} \right)
$$
for linear forms $a$, $b$, $c$ (because the linear parts are zero). 
If $b = 0$, then $N$ is upper triangular, so assume that $b \ne 0$. 
On account of the determinant condition, $a=c$ (quadratic part)
and $b \mid a(a+c)c$ (cubic part). So $b \mid a$ if $\frac12 \in K$. 
If $\frac12 \notin K$, then $b \mid a^2$ on account of the principal $2 \times 2$ 
minors condition, so $b \mid a$ in any case.

Since $b \mid a$ and $b \mid c$, we may assume that $a = c = 0$, because we can 
replace $N$ by $T^{-1}NT$, where
$$
T := \left( \begin{array}{ccc} 1 & -\frac{a}{b} & 0 \\ 0 & 1 & -\frac{c}{b} \\
     0 & 0 & 1 \end{array} \right)  \qquad \mbox{and} \qquad
T^{-1} = \left( \begin{array}{ccc} 1 & \frac{a}{b} & \frac{ac}{b^2} \\ 
         0 & 1 & \frac{c}{b} \\ 0 & 0 & 1 \end{array} \right)
$$
On account of the determinant condition, the upper right corner of $N$ is zero. 
On account of the principal $2 \times 2$ minors condition, $N$ is of 
the form of \eqref{Nconj} for some linear form $f$.
\qedhere

\end{enumerate}
\end{proof}

A square matrix over $K$ is similar over $K$ to its so-called Jordan
normal form, if and only if all its eigenvalues are contained in $K$.
Hence it follows from lemma \ref{eigen} that every nilpotent matrix
over $K$ is similar over $K$ to its Jordan normal form.

This fact about Jordan normal forms is used in the proof of theorem 
\ref{srk2nmain} below. The reader who is not familiar with Jordan normal forms
has to show some required similarities by hand, using the result of
proposition \ref{stronglt} that nilpotent matrices over $K$ are similar
over $K$ to triangular matrices with zeroes on their diagonals.

\begin{theorem} \label{srk2nmain}
Suppose that $H$ is a quadratic polynomial map in dimension $n$ over a field $K$ 
of any characteristic, such that $\rk \jac H \le 2$ and 
$\jac H$ is nilpotent. Then $\jac H$ is similar over $K$ to a triangular matrix. 
\end{theorem}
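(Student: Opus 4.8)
The plan is to reduce $\jac H$, via its rank, to the normal forms supplied by Theorems \ref{quadrk1} and \ref{quadrk2}, and then to upgrade the \emph{equivalence} furnished by those theorems to a \emph{similarity} by exploiting nilpotency. The observation that drives this upgrade is the following reformulation: $M$ is equivalent over $K$ to a matrix whose nonzero entries lie in its first $k$ rows (resp.\ columns) if and only if the rows (resp.\ columns) of $M$ span a subspace of dimension at most $k$ in the $K$-vector space $K[x]^n$. When this holds I can choose $S \in \GL_n(K)$ (resp.\ $T \in \GL_n(K)$) with $SM$ (resp.\ $MT$) having its last $n-k$ rows (resp.\ columns) zero; since a zero row stays zero after right multiplication, and a zero column after left multiplication, the \emph{conjugate} $SMS^{-1}$ (resp.\ $T^{-1}MT$) still has those rows (resp.\ columns) zero. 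Thus the one-sided clearing is automatically similarity-stable, and it lands $\jac H$ in block-triangular form.

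With this in hand I dispose of the degenerate cases. If $\rk \jac H \le 1$, Theorem \ref{quadrk1} reduces $\jac H$ (up to similarity, by the remark above) to a block-triangular matrix whose active principal block is $1 \times 1$ and nilpotent, hence zero; so $\jac H$ is similar over $K$ to a strictly triangular matrix. If $\rk \jac H = 2$, Theorem \ref{quadrk2} gives four forms. Forms (1) and (2) are handled as before: the similarity-stable clearing produces a block-triangular matrix $\left(\begin{smallmatrix} A & * \\ 0 & 0\end{smallmatrix}\right)$ (or its transpose) whose active block $A$ is a nilpotent $2\times 2$ matrix of degree $1$ (nilpotent by Lemma \ref{strongblock}); after putting $A(0)$ in Jordan form, Lemma \ref{quadnil}(i),(ii) triangularize $A$, and Corollary \ref{blocklt} lifts this to $\jac H$. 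Form (4) I rule out entirely: over a field with $\frac12 \notin K$ its leading $3\times 3$ block is symmetric with zero diagonal and linearly independent off-diagonal entries $p,q,r$, so the sum of its $2\times 2$ principal minors equals $-(p^2+q^2+r^2) = -(p+q+r)^2 \neq 0$, whence Lemma \ref{eigen} shows it is not nilpotent.

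The remaining and decisive case is form (3). Here $\tilde M := S(\jac H)T$ has nonzero entries only in its first row and first column, so I may write $\tilde M = e_1 v\tp + w\, e_1\tp$, where $v$ is the first row and $w$ the first column (both with vanishing first entry). Reading off the trace and the sum of the $2\times 2$ principal minors and applying Lemma \ref{eigen} gives the two identities $v_1 = 0$ and $v\tp w = 0$ in $K[x]$. Now I bring in that $\tilde M$ is, up to the automorphism \eqref{chain}, the Jacobian matrix of some $\tilde H$: the vanishing of all entries $\tilde M_{ij}$ with $i,j \ge 2$ forces $\tilde H_i$ to depend only on $x_1$ for $i \ge 2$, so each entry of $w$ is a polynomial in $x_1$ alone; and $v_1 = \partial_1 \tilde H_1 = 0$ forces $\tilde H_1$ to be free of $x_1$ (even in characteristic $2$, where a surviving $x_1^2$ term only affects the already-zero entry $v_1$), so each entry of $v$ is independent of $x_1$. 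Writing $w_i = \alpha_i x_1 + \beta_i$ with $\alpha_i,\beta_i \in K$, the identity $v\tp w = 0$ splits into $\sum_i \alpha_i v_i = 0$ and $\sum_i \beta_i v_i = 0$; consequently the bilinear form $\sum_i v_i(x)\,w_i(y) = y_1\sum_i \alpha_i v_i(x) + \sum_i \beta_i v_i(x)$ vanishes identically. This is exactly the statement that $\tilde M(u)\,\tilde M(u')\,\tilde M(u'') = 0$ for all $u,u',u''$, so $\tilde M$ is strongly nilpotent, and Proposition \ref{stronglt} makes it similar over $K$ to a strictly triangular matrix.

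I expect the genuine difficulty to be concentrated in form (3), and it is instructive to see why. A general nilpotent cross matrix $e_1 v\tp + w\, e_1\tp$ of degree $1$ need \emph{not} be strongly nilpotent: nilpotency only yields the ``diagonal'' identity $v\tp w = 0$, whereas strong nilpotency needs the full bilinear form $\sum_i v_i(x) w_i(y)$ to vanish, and its off-diagonal part can be nonzero. The only leverage that closes this gap is the integrability of a Jacobian matrix, which confines the variable-dependence of $v$ and $w$ to disjoint sets (here $\{x_2,\ldots,x_n\}$ versus $\{x_1\}$); this is precisely the place where the hypothesis ``$\jac H$'' does real work beyond mere nilpotency, and it must be carried out with care in characteristic $2$, where the $\frac12$-normalization of Theorem \ref{quadrk2}(3) is unavailable and where $\partial_1\tilde H_1 = 0$ no longer forbids an $x_1^2$ term in $\tilde H_1$.
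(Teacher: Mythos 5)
Your handling of ranks $0$ and $1$ and of forms (1) and (2) of Theorem \ref{quadrk2} is sound, and your opening observation (a one-sided clearing of rows, or of columns, is automatically similarity-stable) is exactly the device the paper uses. The gap is in forms (3) and (4): there you apply Lemma \ref{eigen} to $\tilde M = S(\jac H)T$ --- reading off $v_1=0$ and $v\tp w=0$ in form (3), and computing the sum of the $2\times2$ principal minors in form (4) --- but $\tilde M$ is only \emph{equivalent} to $\jac H$, not similar to it, and equivalence preserves neither nilpotency nor the principal-minor sums. Your similarity upgrade only applies when the normal form is reached by row operations alone or by column operations alone; forms (3) and (4) genuinely require both, so nothing entitles you to assume $\tilde M$ is nilpotent. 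Concretely, for form (4) your computation shows that the normal form itself is never nilpotent (a correct and pleasant observation), but it does not rule out a nilpotent Jacobian $M$ with $SMT$ of form (4) for some $T \ne S^{-1}$: since $M$ is similar to $\tilde M T^{-1}S^{-1}$, one can exhibit nilpotent degree-one rank-two matrices equivalent to the antisymmetric cross form (take $T^{-1}S^{-1}$ with leading $3\times3$ block $e_1e_1\tp$ inside a permutation matrix); what excludes these from being Jacobian matrices is integrability, which your form-(4) argument never invokes. The same objection removes the two identities on which your entire form-(3) analysis rests, and that analysis is the part you yourself identify as decisive.

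The paper closes this hole by extracting from Theorems \ref{quadrk1} and \ref{quadrk2} only the similarity-invariant information that the rows (or columns) of $\jac H$ span a $K$-space of dimension at most $2$ or $3$ (for form (3) this uses the refinement that the first column is $(*,*,\frac12,0,\ldots,0)$), passing to $S(\jac H)S^{-1}$ or $T^{-1}(\jac H)T$, and then analysing the genuinely nilpotent $2\times2$ or $3\times3$ leading block by hand via Lemma \ref{quadnil} and Corollary \ref{blocklt}. The hard case --- a $3\times3$ block $N$ with $\rk N(0)\ge2$ that is not upper triangular --- is exactly where your forms (3) and (4) end up, and it is dispatched by reducing $N$ to the antisymmetric shape \eqref{Nconj} and deriving a contradiction from Lemma \ref{antisym} applied to the associated $\tilde H$. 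To keep your route you would have to show first that the $S,T$ realizing form (3) or (4) can be taken with $T=S^{-1}$, which is essentially the assertion being proved.
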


\begin{proof}
Let $M = \jac H$.
Suppose first that $\rk M = 1$. From theorem \ref{quadrk1}, it follows that
there exists $S,T \in \GL_n(K)$, such that $\tilde{M} := S M T$ satisfies one 
of the following:
\begin{itemize}

\item \emph{$\tilde{M}$ is as in {\upshape(1)} of theorem {\upshape\ref{quadrk1}}.}

Then $\tilde{M}$ and $T^{-1} M T$ are lower triangular, because only their first 
columns are nonzero. So $M$ is similar over $K$ to a triangular matrix.

\item \emph{$\tilde{M}$ is as in {\upshape(2)} of theorem {\upshape\ref{quadrk1}}.}

Then $\tilde{M}$ and $S M S^{-1}$ are upper triangular, because only their first 
rows are nonzero. So $M$ is similar over $K$ to a triangular matrix.

\end{itemize}
Suppose next that $\rk M = 2$. From theorem \ref{quadrk2}, it follows that
there exists $S,T \in \GL_n(K)$, such that $\tilde{M} := S M T$ satisfies one 
of the following:
\begin{itemize}

\item \emph{$\tilde{M}$ is as in {\upshape(1)} of theorem {\upshape\ref{quadrk2}}.}

Then only the first two columns of $\tilde{M}$ and $T^{-1} M T$ are nonzero.
On account of lemma \ref{strongblock}, the leading principal $2 \times 2$ minor 
matrix $N$ of $T^{-1} M T$ is nilpotent.

Since the Jordan normal form of $N(0)$ is equal to (that of) $N(0)$ 
in (i) or (ii) of lemma \ref{quadnil}, it follows from
(i) and (ii) of lemma \ref{quadnil} that $N$ is 
similar over $K$ to a triangular matrix with only zeroes on the diagonal.
From corollary \ref{blocklt}, we deduce that $M$ is similar over $K$ to a 
triangular matrix as well.

\item \emph{Only the first three rows of $\tilde{M}$ may be nonzero.}

Then only the first three rows of $\tilde{M}$, $S M S^{-1}$ and 
$\jac \big(S H(S^{-1}x)\big) = S M|_{x=S^{-1}x} S^{-1}$ are nonzero.
On account of lemma \ref{strongblock}, the leading principal $3 \times 3$ 
minor matrix $N$ of $S M S^{-1}$ is nilpotent. 

In order to show that $M$ is similar over $K$ to a triangular matrix, it suffices to
show that $S M S^{-1}$ is similar over $K$ to a triangular matrix. From corollary
\ref{blocklt}, we deduce that it suffices to show that $N$ is similar over $K$ to a 
triangular matrix with only zeroes on the diagonal. For that purpose, we distinguish 
three cases.
\begin{description}

\item[\mathversion{bold}$\rk N(0) = 0$.] Then we can replace $M$ 
by the result of substituting $x_i = x_i + 1$ in $M$ for some $i$, 
to obtain $\rk N(0) \ne 0$, because of the following.
$M$ becomes $\jac (H|_{x_i=x_i+1})$, which is a Jacobian matrix 
as well, and the linear part of $N$ is not affected. 

So if $N$ is similar over $K$ to a triangular matrix 
with only zeroes on the diagonal in the new situation, then 
$N$ is similar over $K$ to a triangular matrix with only zeroes 
on the diagonal originally.

\item[\mathversion{bold}$\rk N(0) = 1$.] Then the Jordan normal form 
of $N(0)$ is equal to that of $N(0)$ in (iii) of lemma \ref{quadnil}.
So we can choose $S$, such that $N(0)$ is as in (iii) of lemma \ref{quadnil}. 
It follows from (iii) of lemma \ref{quadnil} that $N$ is similar
over $K$ to a triangular matrix with only zeroes on the diagonal.

\item[\mathversion{bold}$\rk N(0) \ge 2$.] Then the Jordan normal form 
of $N(0)$ is equal to (that of) $N(0)$ in (iv) of lemma \ref{quadnil}. 
So we can choose $S$, such that $N(0)$ is as in (iv) of 
lemma \ref{quadnil}. 

Suppose first that $N$ is upper triangular. From lemma \ref{strongblock},
it follows that the leading principal minor matrix of size $1 \times 1$
and the trailing principal minor matrix of size $2 \times 2$ of $N$ are
nilpotent. By applying lemma \ref{strongblock} on the trailing 
principal minor matrix of size $2 \times 2$, we see that every 
principal minor matrix of size $1 \times 1$ is nilpotent. As $\Mat_1(K)$
is a reduced ring, the diagonal of $N$ is totally zero.

So assume that $N$ is not upper triangular. Then it follows from (iv) 
of lemma \ref{quadnil} that we can choose $S$, such that $N$ is of the 
form of \eqref{Nconj}, where $f$ and $b$ are linear forms and $b \ne 0$,
because $N(0)$ will not be affected. Define
$$
\tilde{N} := N \cdot \left( \begin{array}{ccc} 0 & 0 & -1 \\ 0 & -1 & 0 \\ 
    1 & 0 & 0 \end{array} \right) = \left( \begin{array}{ccc} 0 & -f-1 & 0 \\ 
    f+1 & 0 & -b \\ 0 & b & 0 \end{array} \right)
$$
Notice that $\tilde{N}$ is antisymmetric.
By definition of $\tilde{N}$ and $\tilde{M}$, we can choose $T$ such that 
$\tilde{N}$ is the leading principal $3 \times 3$ minor matrix of $\tilde{M}$.

Since the leading principal $2 \times 2$ minor matrix of $\tilde{N}(0)$ 
has full rank, we can clean the parts outside $\tilde{N}(0)$ of the first two 
rows of $\tilde{M}(0)$ by way of column operations. In other words, we can 
choose $T$ such that the submatrix of the first two rows of $\tilde{M}(0)$
equals
$$
\left(\begin{array}{ccc|ccc}
0 & -1 & 0 & 0 & \cdots & 0 \\
1 & 0 & 0 & 0 & \cdots & 0 
\end{array} \right)
$$
Looking at the constant parts of the $3 \times 3$ minor determinants,
we see that the third and subsequent entries of the third row of $\tilde{M}(0)$ 
are zero. 

Looking at the linear parts of the $3 \times 3$ minor determinants,
we see that the second entry of the third row of $\tilde{M}$ is the only nonzero 
entry in that row. So the third row of $\tilde{M}$ is of the form
$$
\left(\begin{array}{ccc|ccc}
0 & b & 0 & 0 & \cdots & 0
\end{array} \right)
$$
On account of $\eqref{chain}$, $\jac \tilde{H} = \tilde{M}|_{x=Tx}$.
Hence the second entry $\tilde{b}$ of the third row of $\jac \tilde{H}$ is the
only nonzero entry in that row. Furthermore, $\tilde{b}$ is a nonzero linear form
just like $b$. 

Consequently, $\tilde{b} = \lambda x_2$ for some nonzero $\lambda \in K$.
So the coefficient of $x_2^2$ in $\tilde{H}_3$ equals $\frac12 \lambda$.
In particular, 
$$
\tfrac12 \in K \qquad \mbox{and} \qquad \deg_{x_2} \tilde{H}_3 > 1
$$
Since $\jac_{x_1,x_2,x_3}(\tilde{H}_1,\tilde{H}_2,\tilde{H}_3) = 
\tilde{N}|_{x=Tx}$
is antisymmetric, it follows from lemma \ref{antisym} that 
$$
\deg_{x_2} \tilde{H}_3 \le 
\deg_{x_1,x_2,x_3} (\tilde{H}_1,\tilde{H}_2,\tilde{H}_3) \le 1
$$
Contradiction, so $N$ is upper triangular. \qedhere

\end{description}
\end{itemize}
\end{proof}

In the proof of \cite[Lem.~4]{MR3327129}, it is shown that $\jac H^2 = 0$ implies
$(\jac H)(x) \cdot (\jac H)(y) = 0$ if $H$ is quadratic homogeneous
and $\frac12 \in K$, where $y = (y_1,y_2,\ldots,\allowbreak y_n)$ is another $n$-tuple 
of indeterminates. The maps
$$
H = \big(0,x_1,x_1^2,x_1x_2-\tfrac12 x_3\big)
$$ 
and 
$$
H = \big(0,0,0,x_2 x_3, x_3 x_1, x_1 x_2, x_1 x_4 + x_2 x_5 + x_3 x_6\big)
$$
show that the conditions that $H$ is (quadratic) homogeneous and
$\frac12 \in K$ are necessary respectively.

\begin{theorem} \label{JH20}
Suppose that $H$ is a quadratic polynomial map in dimension $n$ over a field $K$ 
of any characteristic, such that $\jac H^2 = 0$. Then the following holds.
\begin{enumerate}[\upshape\bf (i)]

\item $\jac H$ is similar over $K$ to a triangular matrix.

\item If $\frac12 \in K$ and $H$ is homogeneous, then 
$$
(\jac H)(x)\cdot(\jac H)(y)=0
$$
where $y = (y_1,y_2,\ldots,y_n)$ is another $n$-tuple of indeterminates.

\item If $\frac12 \in K$ and $H$ is not (necessarily) homogeneous, then 
$$
(\jac H)(x)\cdot(\jac H)(y)\cdot(\jac H)(z)=0
$$
where $z = (z_1,z_2,\ldots,z_n)$ is yet another $n$-tuple of indeterminates.

\end{enumerate}
\end{theorem}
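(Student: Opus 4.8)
The plan is to read off the algebraic content of $N^2=0$ and then treat the three parts in turn. Write $N:=\jac H$ and split it into constant and linear parts, $N = C_0 + \sum_{i=1}^n x_i C_i$ with $C_0 = N(0)$ and $C_1,\dots,C_n\in\Mat_n(K)$; since $H$ is quadratic these are all the coefficient matrices. Expanding $N^2=0$ by homogeneous degree in $x$ yields, from the degree $0$, $1$ and $2$ parts, the relations $C_a^2=0$ for every $a$ and $C_aC_b+C_bC_a=0$ for $a\neq b$ (indices ranging over $\{0,1,\dots,n\}$). A one-line computation then shows that every element of the $K$-subspace $V:=\operatorname{span}_K\{C_0,C_1,\dots,C_n\}\subseteq\Mat_n(K)$ squares to zero, in any characteristic, the cross terms cancelling in pairs by anticommutativity and the square terms dying because each $C_a^2=0$.

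For (i) I would deduce strong nilpotency and quote Proposition \ref{stronglt}. Fix an infinite extension field $L$ of $K$ and set $d:=\dim_K V$. Every evaluation $N(v)=C_0+\sum_i v_iC_i$ with $v\in L^n$ lies in $V_L:=V\otimes_K L$, which is again a space of square-zero matrices. Because every element of $V_L$ squares to zero, the inclusion $V_L\hookrightarrow\Mat_n(L)$ extends to an algebra homomorphism $\Lambda(V_L)\to\Mat_n(L)$ carrying a wedge of vectors to the corresponding matrix product; as $\Lambda^{d+1}(V_L)=0$, any product of more than $d$ matrices from $V_L$ vanishes. Hence $N(v^{(1)})\cdots N(v^{(r)})=0$ for all $v^{(t)}\in L^n$ once $r>d$, so $N$ is strongly nilpotent over $L$ and Proposition \ref{stronglt} makes $\jac H$ similar over $K$ to a strictly triangular matrix. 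This handles (i) uniformly in the characteristic and uses only $\deg N\le 1$ and $N^2=0$, not that $N$ is a Jacobian matrix.

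For (ii) assume $\tfrac12\in K$ and $H$ homogeneous, so $C_0=0$ and $N=\sum_i x_iC_i$. The new ingredient is that $N$ is Jacobian: writing $\Gamma_{ijk}$ for the coefficient of $x_j$ in the $(i,k)$ entry of $N$, one has $\Gamma_{ijk}=\tfrac{\partial^2}{\partial x_j\,\partial x_k}H_i$, symmetric in $j$ and $k$. Put $F(i;j,m;l):=\sum_k\Gamma_{ijk}\Gamma_{kml}$, so that $(N(x)N(y))_{il}=\sum_{j,m}F(i;j,m;l)\,x_jy_m$ and the goal is $F\equiv 0$. Now $N^2=0$ forces $F$ to be antisymmetric under interchanging $j$ and $m$, while the Hessian symmetry forces $F$ to be symmetric under interchanging $m$ and $l$. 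The plan is to alternate these: applying the antisymmetry in $(j,m)$ and the symmetry in $(m,l)$ alternately, three of each, returns $F$ to itself with overall sign $-1$, so $2F=0$ and hence $F=0$ because $\tfrac12\in K$, giving $N(x)N(y)=0$. I expect this $S_3$-type index bookkeeping to be the crux of the theorem; everything else is formal.

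For (iii) keep $\tfrac12\in K$ but drop homogeneity, and reduce to (ii). The leading quadratic part $\bar H$ of $H$ is homogeneous with $\jac\bar H = L := \sum_i x_iC_i$ and $L^2=0$, so (ii) applied to $\bar H$ gives the bilinear identity $L(x)L(y)=0$, strictly stronger than the $L(x)L(x)=0$ coming from $N^2=0$ alone. Together with the degree $0$ and degree $1$ parts of $N^2=0$, namely $C_0^2=0$ and $C_0 L(v)+L(v)C_0=0$ for all $v$, I would expand $N(x)N(y)N(z)=(C_0+L(x))(C_0+L(y))(C_0+L(z))$ into its eight terms and check that each vanishes: a term with at least two constant factors reduces, after anticommuting $C_0$ past an $L$ where necessary, to one containing $C_0^2=0$, while a term with at least two linear factors reduces likewise to one containing a product $L(\cdot)L(\cdot)=0$. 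Hence $N(x)N(y)N(z)=0$, which is precisely strong nilpotency of length three.
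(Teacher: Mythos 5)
Your proof is correct, and part (i) takes a genuinely different route from the paper's. For (i), the paper first reduces to the homogeneous case, then invokes a symmetry property of quadratic homogeneous maps from the cited reference together with the anticommutation relation $(\jac H)(y)(\jac H)(x) = -(\jac H)(x)(\jac H)(y)$ and a matrix $Z$ of indeterminates of rank $n$ to force strong nilpotency. Your argument instead observes that the coefficient matrices $C_0,\dots,C_n$ span a subspace $V$ of square-zero matrices, so that the evaluation map factors through the exterior algebra $\Lambda(V_L)$ and every product of more than $\dim V$ evaluations vanishes; this is cleaner, avoids the reduction to the homogeneous case entirely, and proves the stronger statement that \emph{any} degree-one matrix $N$ with $N^2=0$ (Jacobian or not) is strongly nilpotent, hence similar over $K$ to a strictly triangular matrix, in every characteristic. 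For (ii) the paper simply cites the proof of \cite[Lm.~4]{MR3327129}, whereas you give a self-contained argument: the trilinear coefficient array $F(i;j,m;l)=\sum_k\Gamma_{ijk}\Gamma_{kml}$ is antisymmetric in $(j,m)$ by $N^2=0$ and symmetric in $(m,l)$ by equality of mixed partials, and the resulting $S_3$-chain $F_{jml}=F_{jlm}=-F_{ljm}=-F_{lmj}=F_{mlj}=F_{mjl}=-F_{jml}$ gives $2F=0$, hence $F=0$ since $\frac12\in K$; this checks out. Your part (iii) — split $N=C_0+L$, get $C_0^2=0$, $C_0L(v)+L(v)C_0=0$ and $L(x)L(y)=0$ from (ii), then kill all eight terms of the expanded triple product by pigeonhole and anticommutation — is essentially identical to the paper's argument. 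The only thing you lose relative to the paper is the explicit bound $r\le n+1$ one could extract in (i) is replaced by $r\le\dim V+1\le n+2$, which is immaterial for the conclusion.
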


\begin{proof}
Let $y = (y_1,y_2,\ldots,y_n)$ be an $n$-tuple of indeterminates.
From $\deg \jac H \le 1$, it follows that
$$
(\jac H)\big(tx+(1-t)y\big) = t(\jac H)(x) + (1-t)(\jac H)(y)
$$
Taking squares on both sides, we deduce that
$$
0 = t(1-t)\big((\jac H)(x)\cdot(\jac H)(y)+(\jac H)(y)\cdot(\jac H)(x)\big)
$$
Consequently,
\begin{equation} \label{anticomm}
(\jac H)(y)\cdot(\jac H)(x) = - (\jac H)(x)\cdot(\jac H)(y)
\end{equation}
\begin{enumerate}[\upshape\bf (i)]

\item Let
$$
Z = \left( \begin{array}{cccc}
Z_{11} & Z_{12} & \cdots & Z_{1n} \\
Z_{21} & Z_{22} & \cdots & Z_{2n} \\
\vdots & \vdots & \sdots\, & \vdots \\
Z_{n1} & Z_{n2} & \cdots & Z_{nn}
\end{array} \right)
$$
be a matrix of indeterminates. Since we can substitute elements of 
any field $L \supseteq K$ in the indeterminates of the matrix $(Z\,|\,x\,|\,y)$, 
it follows from proposition \ref{stronglt} that it suffices to show that
$$
\Big( \prod_{i=1}^n (\jac H)(Ze_i) \Big)
\cdot (\jac H)(x) \cdot (\jac H)(y) = 0
$$
Just as in the proof of (iii) below, we can reduce to the case where $H$ is
homogeneous, at the cost of losing $(\jac H)(y)$ as a factor. So if we show that
\begin{equation} \label{Zx}
\Big( \prod_{i=1}^n (\jac H)(Ze_i) \Big) \cdot (\jac H)(x) = 0
\end{equation}
then we may assume that $H$ is homogeneous.

Hence suppose that $H$ is homogeneous. From (ii) of \cite[Prop.~3]{MR3327129} 
and \eqref{anticomm}, it follows that 
$$
\Big( \prod_{i=1}^n (\jac H)(Ze_i) \Big) \cdot (\jac H)(x) \cdot Ze_j
= \big(\,\sdots\,\big) \cdot \big((\jac H)(Ze_j)\big)^2 \cdot x = 0
$$
for every $j \le n$. Consequently
$$
\Big( \prod_{i=1}^n (\jac H)(Ze_i) \Big) \cdot (\jac H)(x) \cdot Z = 0
$$
As $\rk Z = n$, \eqref{Zx} follows.

\item This is shown in the proof of \cite[Lem.~4]{MR3327129}.

\item Let $\bar{H}$ be the quadratic part of $H$. Notice that
\begin{equation} \label{expand}
\prod_{i=1}^3 (\jac H)(Ze_i)
= \prod_{i=1}^3\left((\jac \bar{H})(Ze_i) + (\jac H)(0)\right) 
\end{equation} 
and that every term of the expansion of the right hand side of
\eqref{expand} either has two factors $(\jac H)(0)$, or two distinct 
(but not necessarily different) factors 
$(\jac \bar{H})(Ze_i)$ and $(\jac \bar{H})(Ze_j)$, where $1 \le i \le j \le 3$. 

From \eqref{anticomm}, it follows that
\begin{align*}
\big((\jac\bar{H})(x)\big)^2 
&= \big((\jac H)(x) - (\jac H)(0)\big)^2 \\
&= \big((\jac H)(x)\big)^2 + \big((\jac H)(0)\big)^2 = 0
\end{align*}
From (ii), we subsequently deduce that
\begin{equation} \label{anticommh}
(\jac\bar{H})(x) \cdot (\jac\bar{H})(y) = 0
\end{equation}
Furthermore, it follows from \eqref{anticomm} that
\begin{align*}
(\jac\bar{H})(x) \cdot (\jac H)(0) 
&= \big((\jac H)(x) - (\jac H)(0)\big) \cdot (\jac H)(0) \\
&= - (\jac H)(0) \cdot \big((\jac H)(x) - (\jac H)(0)\big) \\
&= - (\jac H)(0) \cdot (\jac \bar{H})(x)
\end{align*}
Consequently, the factors of the terms of the expansion of the right hand side of
\eqref{expand} anticommute. From \eqref{anticommh} and $\big((\jac H)(0)\big)^2 = 0$, 
we deduce that every term of the expansion of the right hand side of
\eqref{expand} equals zero. 
So $(\jac H)(Ze_1)\cdot(\jac H)(Ze_2)\cdot(\jac H)(Ze_3) = 0$. \qedhere

\end{enumerate}
\end{proof}

The conclusions of (ii) and (iii) of theorem \ref{JH20} can be reformulated as 
properties of a triangular matrix to which $JH$ is similar over $K$, see 
\cite[Th.~2.1]{MR3177043} and \cite[Cor.~2.2]{MR3177043}. Using this reformulation
more generally, one can deduce the following. 

\begin{proposition}
Let $H$ be a polynomial map, such that $\jac H$ is similar over $K$ 
to a triangular matrix. If $\jac H$ is nilpotent and $r = \rk \jac H$,
then 
$$
(\jac H) (Z e_1) \cdot (\jac H) (Z e_2) \cdot \cdots 
\cdot (\jac H) (Z e_r) \cdot (\jac H) (Z e_{r+1}) = 0
$$
where $Z$ is as in the proof of {\upshape(i)} of theorem {\upshape\ref{JH20}}.
\end{proposition}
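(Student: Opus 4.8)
The plan is to reduce to the case where $\jac H$ is itself strictly upper triangular, and then to run a purely combinatorial argument on the support pattern of the matrix. The point worth emphasising at the outset is that the hypothesis ``$\jac H$ is similar over $K$ to a triangular matrix'' (rather than mere nilpotency) is exactly what makes all the evaluations $(\jac H)(Ze_i)$ simultaneously triangular, which is what the combinatorial expansion needs.

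First I would normalise. Since $\jac H$ is nilpotent and similar over $K$ to a triangular matrix, it is similar over $K$ to a \emph{strictly} upper triangular matrix: one orients it upward (a similarity over $K$), and the diagonal entries, being nilpotent polynomials, vanish because $K[x]$ is a domain (cf.\ lemma \ref{strongblock}). Choose $S \in \GL_n(K)$ realising this similarity and set $\tilde H := SH(S^{-1}x)$. By \eqref{chain}, $\jac \tilde H = S\,(\jac H)|_{x=S^{-1}x}\,S^{-1}$ is strictly upper triangular and $\rk \jac \tilde H = \rk \jac H = r$. For each $i$ one has $(\jac \tilde H)(Ze_i) = S\,(\jac H)(S^{-1}Ze_i)\,S^{-1}$, so after telescoping the inner factors $S^{-1}S$ we get $\prod_{i=1}^{r+1}(\jac \tilde H)(Ze_i) = S\big(\prod_{i=1}^{r+1}(\jac H)((S^{-1}Z)e_i)\big)S^{-1}$. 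As $S^{-1}Z$ is again a generic matrix of indeterminates with $K[S^{-1}Z]=K[Z]$, the product for $H$ vanishes if and only if the product for $\tilde H$ does; hence I may assume $N := \jac H$ is strictly upper triangular.

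Next come two elementary observations. Each $N(Ze_i)$ is obtained by substituting the distinct indeterminates $Z_{1i},\ldots,Z_{ni}$ for $x_1,\ldots,x_n$, which preserves both the strictly-upper-triangular shape and the exact set of identically-zero entries; so $N(Ze_1),\ldots,N(Ze_{r+1})$ all share the common support pattern of $N$. Expanding the product entrywise, $(N(Ze_1)\cdots N(Ze_{r+1}))_{k,l}$ is a sum over strictly increasing chains $k=j_0<j_1<\cdots<j_{r+1}=l$ with $N_{j_{t-1}j_t}\neq 0$ for every $t$. It therefore suffices to show that the support digraph of $N$ (vertices $\{1,\ldots,n\}$, with an edge $a\to b$ whenever $a<b$ and $N_{ab}\neq 0$) has no directed path on $r+2$ vertices. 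The key lemma, which is the heart of the matter, bounds path length by rank: a directed path $v_0<v_1<\cdots<v_p$ produces a $p\times p$ submatrix of $N$ on rows $v_0,\ldots,v_{p-1}$ and columns $v_1,\ldots,v_p$ that is upper triangular with diagonal entries $N_{v_0v_1},\ldots,N_{v_{p-1}v_p}$, all nonzero; its determinant is a nonzero element of the domain $K[x]$, whence $\rk N \ge p$. Thus every directed path has at most $r$ edges, so no chain on $r+2$ vertices exists and $\prod_{i=1}^{r+1} N(Ze_i)=0$.

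The step I expect to require the most care is the reduction, specifically checking that passing to the similar strictly triangular matrix leaves the vanishing statement unchanged (the relabelling $Z\mapsto S^{-1}Z$ of the generic indeterminates, and the invariance of $r$). The rank--path lemma, although the conceptual core, is short once the triangular normalisation is in place; it can also be extracted from the reformulation in \cite[Th.~2.1]{MR3177043} and \cite[Cor.~2.2]{MR3177043}, which is presumably the route the surrounding text has in mind.
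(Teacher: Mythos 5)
Your argument is correct, and it is complete where the paper is deliberately terse: the paper attaches no proof to this proposition, instead deducing it from the reformulation of strong nilpotency for triangular matrices in \cite[Th.~2.1]{MR3177043} and \cite[Cor.~2.2]{MR3177043}. Your normalisation is exactly the reduction that citation presupposes, and you verify the two points that genuinely need verification: that a nilpotent matrix which is similar over $K$ to a triangular matrix is in fact similar to a \emph{strictly} triangular one (the diagonal entries are nilpotent elements of the domain $K[x]$, hence zero), and that the vanishing statement is stable under conjugation because $Z \mapsto S^{-1}Z$ is an automorphism of $K[Z]$ sending columns to columns while $\rk \jac H$ is unchanged. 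What your proof adds over the paper's pointer is the explicit rank--path lemma: a directed path $v_0<\cdots<v_p$ in the support digraph of a strictly upper triangular $N$ exhibits a $p\times p$ minor matrix on rows $v_0,\ldots,v_{p-1}$ and columns $v_1,\ldots,v_p$ which is upper triangular with nonzero diagonal, hence has nonzero determinant in $K[x]$, so $p\le \rk N = r$; since each $N(Ze_i)$ has support contained in that of $N$, every term in the entrywise expansion of a product of $r+1$ factors contains a vanishing entry. This makes the proposition self-contained, and it isolates the fact that the bound depends only on $N$ being similar over $K$ to a strictly triangular matrix of rank $r$, not on $N$ being a Jacobian matrix. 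The one hypothesis you should keep visible is nilpotency: it is what forces the zero diagonal (the identity matrix is triangular of rank $n$ and satisfies no such vanishing), and you do invoke it at the right place.
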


It follows that the conclusions of (ii) and (iii) of theorem \ref{JH20} can be 
added to the cases $\rk \jac H = 1$ and $\rk \jac H = 2$ of theorem 
\ref{srk2nmain} respectively as well.

%\cite{DBLP:conf/mfcs/2016}

\bibliographystyle{quadrk2j}
\bibliography{quadrk2j}

\end{document}